\numberwithin{equation}{section}
\numberwithin{theorem}{section}
\numberwithin{lemma}{section}
\numberwithin{remark}{section}
\renewcommand{\figurename}{Fig.}
\newtheorem{thm}{Theorem}
\newtheorem{prop}[thm]{Proposition}
\newtheorem{cor}[thm]{Corollary}
\begin{document}

\title{First exit and Dirichlet problem for the nonisotropic tempered $\alpha$-stable processes}


\author{Xing Liu and Weihua Deng
}


\institute{
Xing Liu \at
              School of Mathematics and Statistics, Gansu Key Laboratory of Applied Mathematics and Complex
Systems, Lanzhou University, Lanzhou 730000, People's Republic of China. \email{2718826413@qq.com}
\and
Weihua Deng\at School of Mathematics and Statistics, Gansu Key Laboratory of Applied Mathematics and Complex
Systems, Lanzhou University, Lanzhou 730000, People's Republic of China.
\email{dengwh@lzu.edu.cn}
}
\maketitle

\begin{abstract}
	
This paper discusses the first exit and Dirichlet problems of the nonisotropic tempered $\alpha$-stable process $X_t$. The upper bounds of all moments of the first exit position $\left|X_{\tau_D}\right|$ and the first exit time $\tau_D$ are firstly obtained. It is found that the probability density function of $\left|X_{\tau_D}\right|$ or $\tau_D$ exponentially decays with the increase of $\left|X_{\tau_D}\right|$ or $\tau_D$, and $\mathrm{E}\left[\tau_D\right]\sim \left|\mathrm{E}\left[X_{\tau_D}\right]\right|$,\ $\mathrm{E}\left[\tau_D\right]\sim\mathrm{E}\left[\left|X_{\tau_D}-\mathrm{E}\left[X_{\tau_D}\right]\right|^2\right] $. Since $\mathrm{\Delta}^{\alpha/2,\lambda}_m$ is the infinitesimal generator of the anisotropic tempered stable process, we obtain the Feynman-Kac representation of the Dirichlet problem with the operator $\mathrm{\Delta}^{\alpha/2,\lambda}_m$. Therefore, averaging the generated trajectories of the stochastic process leads to the solution of the Dirichlet problem, which is also verified by numerical experiments.
\\
\\
\keywords{first exit problem; asymmetric tempered process; exponential decay; infinitesimal generator; Monte Carlo algorithm}


\end{abstract}

\section{Introduction}

L\'evy processes can effectively model the evolution processes with huge fluctuations, for example, fresh-water released by huge icebergs (Heinrich events), large fluctuations of the solar radiation steered by huge fluid outbursts on the surface of the sun. Sometimes, because of the particular  bounded physical space, the extremely large oscillation should be suppressed and then the tempered L\'evy processes are introduced \cite{43}. While describing the diffusion in complex inhomogeneous media, the nonisotropic tempered $\alpha$-stable processes are natural and reasonable choice. These and the related processes have been studied more or less from different aspects in recent years. For example, according to the characteristic functions of stochastic processes, the corresponding Fokker-Planck equations are derived \cite{1,2}; the numerical schemes are designed to solve the obtained Fokker-Planck equations \cite{3,4}; the relationship between mean square displacement (MSD) of stochastic process and time is discussed \cite{5}; and there are also many discussions on the applications of the stochastic processes and the corresponding macroscopic equations \cite{6,7,8,9}.

The first hitting time is defined as the time when a certain condition is fulfilled by the random variable of interest for the first time \cite{10}, which has a lot of potential applications. The example of first passage time naturally coming to our mind is the decision of an investor to buy or sell stock when its fluctuating prices reach a certain threshold \cite{14}. Here, we focus on the time and position distribution of first exit from a sphere for the nonisotropic tempered $\alpha$-stable processes, and use the results to numerically solve the corresponding Dirichlet problem.
The time and position distribution of first exit from a sphere are, respectively, defined as
 \begin{displaymath}
\tau_{B(0,r)}=\inf\left\{t>0:\ X_t\notin B(0,r)\right\},\qquad \rho(x)=P\left[X_{\tau_{B(0,r)}}\in \mathrm{d}x,\, \tau_{B(0,r)}<\infty\right],
 \end{displaymath}
where $X_t\in \mathrm{R}^d$ is a stochastic process, and $X_0=x,\ \ x\in\mathrm{R}^d$; the notation $B(0,r)$ is a sphere centred at the origin and the radius is $r$; the random variable $\tau_{B(0,r)}$ is the first exit time and $\rho(x)$ is the probability density function (PDF) of the first exit position. For the isotropic stochastic processes, there are some results on first exit time and position distribution obtained by establishing equations.
Letting $X_0=0$, when $X_t$ is the Brownian motion \cite{10,11,12,13},
\begin{displaymath}
\mathrm{E}\left[\tau_{B(0,r)}\right]=\frac{r^2}{2d},
\end{displaymath}
and $\rho(x)$ is uniform distribution on the boundary of $B(0,r)$, and the trajectories of the process hit $\partial B(0,r)$ in finite time with probability 1, because of the continuity and isotropy of Brownian motion. When $X_t$ is the $\alpha$-stable L\'evy process \cite{14,13,15,16,17,18,19,20,21,22,23,24,25,26,27},
\begin{displaymath}
\mathrm{E}\left[\tau_{B(0,r)}\right]=\frac{\Gamma(d/2)r^\alpha}{2^\alpha \Gamma(1+\alpha/2)\Gamma(d/2+\alpha/2)};
\end{displaymath}
due to the discontinuity of the paths of the processes, a particle starting at $X_0=0$, first escapes $B(0,r)$ and then lands in $B^c(0,r)$ (the complement of $B(0,r)$ in $\mathrm{R}^d$). Therefore, one needs to pay attention to the PDF $\rho(x)$ of the random variable $X_{\tau_{B(0,r)}}$ in $B^c(0,r)$. For $r<\infty$, $\rho(x)$ is given in \cite{28}.

Although there are many achievements for Brownian motion and $\alpha$-stable processes, little research has been done on the average of first exit time and the distribution of random variable $X_{\tau_{B(0,r)}}$, when the process is nonisotropic tempered process. Part of the reason is that it is difficult to get effective results by establishing equations. Tempered stable laws wipe off the probability of extremely large jumps, so that all moments of the tempered stable process exist. Thus, this can be preferable in application where the moments have a physical meaning. And the diffusion of particles may be nonisotropic due to environmental effects. In many practical applications, the nonisotropic tempered model may be more reasonable for simulating real data; so this paper concentrates on its Dirichlet problem and connection to first exit problems. The Dirichlet problem of Brownian motion is
 \begin{equation}\label{eq:1.1}
\begin{aligned}
\mathrm{\Delta}u(x)&=0,\qquad x\in D,\\
u(x)&=g_1(x),\qquad x\in\partial D,
\end{aligned}
\end{equation}
where $D$ is a domain in $\mathrm{R}^d$, $d\geq2$, with sufficiently smooth boundary, and $g_1$ is a continuous function on the boundary. The Dirichlet problem of $\alpha$-stable L\'evy process has the form
%
%
 \begin{equation}\label{eq:1.2}
\begin{aligned}
-(-\mathrm{\Delta})^{\alpha/2}u(x)&=0,\qquad x\in D,\\
u(x)&=g_2(x),\qquad x\in D^c,
\end{aligned}
\end{equation}
where $0<\alpha<2$, $g_2:\ D^c\to\mathrm{R}$ is a suitably regular function; and noting that $-(-\mathrm{\Delta})^{\alpha/2}$ is no longer a local operator, thus $\partial D$ is replaced by $D^c$ (the complement of $D$ in $\mathrm{R}^d$). Eq.\ \eqref{eq:1.1} is a very classical model, and it has been sufficiently studied in almost every aspect. As for Eq.\ \eqref{eq:1.2}, it attracts the wide interests of researchers in recent years, e.g., the discussion of the numerical schemes and their implementations \cite{29,30,31,32,33}; the main challenge of numerically solving the equation comes from the nonlocality of the fractional Laplacian and the weak singularity of the solution of \eqref{eq:1.2}. The well-known Feynman-Kac representation \cite{34,35} implies that if $u(x)$ is a solution to Eq.\ \eqref{eq:1.2}, then
\begin{equation}\label{eq:1.3}
u(x)=\mathrm{E}_x\left[g_2\left(X_{\tau_D}\right)\right],\qquad x\in D,
\end{equation}
where $\tau_D=\mathrm{inf}\left\{t>0:X_t\notin D\right\}$, and $X_t$ is the $\alpha$-stable L\'evy process; for Eq. \eqref{eq:1.1}, the similar representation holds, just replacing $g_2$ by $g_1$ in  Eq. \eqref{eq:1.3} and taking $X_t$ to be Brownian motion.
Eq.\ \eqref{eq:1.3} suggests that the solution of Dirichlet problem Eq.\ \eqref{eq:1.2} can be generated numerically by Monte Carlo algorithm \cite{36,37,38,39,40,41}. The advantage of Monte Carlo algorithm is that it can avoid the weak singularity and does not have the challenge of numerical cost for fractional Laplacian.



The Dirichlet problem for the asymmetric tempered fractional Laplacian, considered in this paper,  is \cite{42}
\begin{equation}\label{eq:1.4}
\begin{aligned}
\mathrm{\Delta}^{\alpha/2,\lambda}_mu(x)&=f(x),\qquad x\in D,\\
u(x)&=g(x),\qquad x\in D^c,
\end{aligned}
\end{equation}
where $\alpha\in(0,1)\cup(1,2)$, $f:\ D\to\mathrm{R}$ and $g:\ D^c\to\mathrm{R}$ are suitable functions; and
\begin{equation}\label{eq:1.5}
 \begin{aligned}
\mathrm{\Delta}^{\alpha/2,\lambda}_mu(x)=&c_{m,\alpha}\int_{\mathrm{R}^d\setminus\{0\}}\left[u(x-y)-u(x)
+\left(y\cdot\nabla_xu(x)\right)_{\chi_{\{|y|<1\}}}\right]\frac{m(y)}{\mathrm{e}^{\lambda |y|}|y|^{d+\alpha}}\mathrm{d}y
\end{aligned}
\end{equation}
with $m(x)$ denoting the probability distribution of particles spreading in direction and $c_{m,\alpha}$ being a normalized constant.
It seems that effectively solving Eq. \eqref{eq:1.4} is not an easy task because of the nonsymmetry and nonlocal property of Eq. \eqref{eq:1.5}. We demonstrate that the operator $\mathrm{\Delta}^{\alpha/2,\lambda}_m$ is an infinitesimal generator of the nonisotropic tempered stable process and present the Feynman-Kac representation of Eq.\ \eqref{eq:1.4}. Then, the Monte Carlo algorithm may be a feasible approach.

This paper is organized as follows. In the next section, we introduce the characteristic functions and compound Poisson forms of anisotropic tempered stable processes. In section 3, we estimate all the moments of $\left|X_{\tau_{B(0,r)}}\right|$ and $\tau_{B(0,r)}$; and the relationship between $X_{\tau_{B(0,r)}}$ and $\tau_{B(0,r)}$ is given in the mean sense. In section 4, we obtain the Feynman-Kac representation of the Dirichlet problem for the anisotropic tempered fractional Laplacian $\mathrm{\Delta}^{\alpha/2,\lambda}_m$. The numerical experiments are performed in section 5. Finally, we conclude the paper with some discussions in section 6.
\section{Tempered stable processes with L\'evy symbol and notations} \label{sec:2}
Let $X_t\,(t\geq0)$ be the isotropic tempered stable process in $\mathrm{R}^d$. Then its characteristic function $\hat{p}(k,t)=\mathrm{E}\left[\mathrm{e}^{i(k\cdot X_t)}\right]=\mathrm{e}^{t \psi(k)}$ \cite{43,44}, where the L\'evy symbol 
 \begin{equation}\label{eq:2.1}
 \psi(k)=\int_{\mathrm{R}^d\setminus\{0\}}\left(\mathrm{e}^{i(k\cdot x)}-1-i(k\cdot x)_{\chi\{|x|<1\}}\right)\nu(\mathrm{d}x)
 \end{equation}
 and the L\'evy measure
\begin{displaymath}
\nu(\mathrm{d}x)=c_\alpha\mathrm{e}^{-\lambda |x|}|x|^{-\alpha-d}
\end{displaymath}
with $c_\alpha=\frac{\Gamma(d/2)}{2\pi^{d/2}|\Gamma(-\alpha)|}$, $0<\alpha<2$, and $\alpha\neq1$. For the nonisotropic diffusion, the L\'evy measure is given as
\begin{displaymath}
\nu(\mathrm{d}x)=c_{m,\alpha}\frac{m(x)}{\mathrm{e}^{\lambda |x|}|x|^{d+\alpha}}\mathrm{d}x;
\end{displaymath}
to help understand the meaning of $m(x)$, one can notice that $\nu(\mathrm{d}x)$ has the polar coordinate form
\begin{displaymath}
c_{m,\alpha}\frac{m(\theta)}{\mathrm{e}^{\lambda r}r^{1+\alpha}}\mathrm{d}\theta\mathrm{d}r,
\end{displaymath}
where $r\ge0$, $\theta\in[0,2\pi)$ represents the direction, $m(\theta)$ is the probability distribution of particles in $\theta$-direction \cite{42}, $c_{m,\alpha}$ is the normalized constant;
and the anisotropic diffusion equation is
\begin{displaymath}
\frac{\partial p(x,t)}{\partial t}=\mathrm{\Delta}^{\alpha/2,\lambda}_mp(x,t).
\end{displaymath}
The definitions of the two special cases of the tempered fractional Laplacian are given as  \cite{42}  

$Case\ I$:  $0<\alpha<1$ or $m(y)$ is symmetric,
 \begin{equation}\label{eq:2.2}
\mathrm{\Delta}^{\alpha/2,\lambda}_mp(x,t)=c_{m,\alpha}\int_{\mathrm{R}^d\setminus\{0\}}\left[p(x-y,t)-p(x,t)\right]\frac{m(y)}{\mathrm{e}^{\lambda |y|}|y|^{d+\alpha}}\mathrm{d}y
\end{equation}

$Case\ II$:  $1<\alpha<2$ and $m(y)$ is asymmetric,
 \begin{equation}\label{eq:2.3}
 \begin{aligned}
\mathrm{\Delta}^{\alpha/2,\lambda}_mp(x,t)=&c_{m,\alpha}\int_{\mathrm{R}^d\setminus\{0\}}\left[p(x-y,t)-p(x,t)
+\left(y\cdot\nabla_xp(x,t)\right)_{\chi_{\{|y|<1\}}}\right]\frac{m(y)}{\mathrm{e}^{\lambda |y|}|y|^{d+\alpha}}\mathrm{d}y\\
&-c_{m,\alpha}\Gamma(1-\alpha)\lambda^{\alpha-1}\left(\mathbf{b}\cdot\nabla_xp(x,t)\right),
\end{aligned}
\end{equation}
where $\mathbf{b}=\int_{|\mathbf{\phi}|=1}\mathbf{\phi}m(\mathbf{\phi})\mathrm{d}\mathbf{\phi}$.
From Eq.\ \eqref{eq:2.2} and Eq.\ \eqref{eq:2.3}, we obtain the L\'evy symbols of the corresponding  anisotropic tempered stable processes,
\begin{equation}\label{eq:2.4}
 \psi(k)=c_{m,\alpha}\int_{R^d\setminus\{0\}}\left(\mathrm{e}^{i(k\cdot y)}-1\right)\frac{m(y)}{\mathrm{e}^{\lambda |y|}|y|^{d+\alpha}}\mathrm{d}y
 \end{equation}
and
\begin{equation}\label{eq:2.5}
\begin{aligned}
\psi(k)=&c_{m,\alpha}\int_{R^d\setminus\{0\}}\left(\mathrm{e}^{i(k\cdot y)}-1-i(k\cdot y)_{\chi_{\{|y|<1\}}}\right)\frac{m(y)}{\mathrm{e}^{\lambda |y|}|y|^{d+\alpha}}\mathrm{d}y\\
&+c_{m,\alpha}\Gamma(1-\alpha)\lambda^{\alpha-1}i\left(\mathbf{b}\cdot k\right),
\end{aligned}
\end{equation}
which indicates that the anisotropic tempered stable process $X_t\,(t\geq0)$ can be expressed by compound Poisson process. So, we have
 \begin{equation}\label{eq:2.6}
 X_t=\sum^{N(t)}_{i=0}Z_i,\qquad 0<\alpha<1
\end{equation}
and
 \begin{equation}\label{eq:2.7}
 X_t=\sum^{N(t)}_{i=0}Z_i-t\mathbf{\bar{b}},\qquad 1<\alpha<2,
\end{equation}
%
where
\begin{equation}
\begin{aligned}
\mathbf{\bar{b}}=&-c_{m,\alpha}\int_{R^d\setminus\{0\}} y_{\chi\{|y|<1\}}\frac{m(y)}{\mathrm{e}^{\lambda |y|}|y|^{d+\alpha}}\mathrm{d}y\\
&+c_{m,\alpha}\Gamma(1-\alpha)\lambda^{\alpha-1}\left(\mathbf{b}\right);
\end{aligned}
\end{equation}
 $Z_0$, $Z_1$, $Z_2$, $\cdots$, are a sequence of independent and identically distributed (i.i.d.) random variables taking valves in $\mathrm{R}^d$; the distribution of $Z_i$ is $\frac{m(x)\nu(\mathrm{d}x)}{\nu(|x|>0)}$; and $N(t)$ has a Poisson distribution
 \begin{equation*}
 P\left(N(t)=n\right)=\mathrm{e}^{-\mu t}\frac{(\mu t)^n}{n!},
 \end{equation*} %
where $\mu=c_{m,\alpha}\nu(|x|>0)$ is renewal intensity. Let $\eta_i$ be the waiting time between the ($i-1$)-th and $i$-th jumps. Then
\begin{equation*}
T[n]=\sum^{n}_{i=1}\eta_i,
\end{equation*}
which leads to $N(t)=\max\{n\ge0:\ T[n]\le t\}$.

Since the stable process is a compound Poisson process, naturally we can consider all the moments of $\tau_{B(0,r)}$ and $\left|X_{\tau_{B(0,r)}}\right|$ based on the compound Poisson processes.

\section{First exit position and time} \label{sec:3}
The average first exit time \cite{45} is a useful observation; here we provide the estimate of it for the processes discussed in the paper.
Define
\begin{displaymath}
M_{N(t)}=\max_{0\le s\le t}|X_s|.
\end{displaymath}

\begin{thm} \label{th:1}
Let $D$ be a bounded domain in $\mathrm{R}^d$ and $X_t$ the stochastic process $\mathrm{Eq}$.\ \eqref{eq:2.6} or $\mathrm{Eq}$.\ \eqref{eq:2.7}, $D\subseteq B(x,r)$. If $P(|Z|>2r)>0$, then
\begin{displaymath}
\mathrm{E}\left[\tau_D\right]\le\frac{1}{\mu P(|Z|>2r)}.
\end{displaymath}
\end{thm}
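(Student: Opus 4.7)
The plan is to exploit the fact that $X_t$ is essentially a compound Poisson process and that $D \subseteq B(x,r)$ has diameter at most $2r$; so a single jump whose magnitude exceeds $2r$ must knock the particle out of $D$, regardless of where in $D$ it currently is. This lets us dominate $\tau_D$ by a geometric sum of exponential waiting times.

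First I would introduce the stopping index
\begin{equation*}
J = \inf\bigl\{j \ge 1 : |Z_j| > 2r\bigr\}.
\end{equation*}
Because the $Z_i$ are i.i.d.\ and $P(|Z|>2r)>0$, the variable $J$ is geometric with parameter $p:=P(|Z|>2r)$, and in particular $\mathrm{E}[J]=1/p<\infty$. The crucial pathwise claim is $\tau_D \le T[J]$. To see this, distinguish two possibilities. Either the trajectory has already left $D$ strictly before the $J$-th jump time, in which case $\tau_D < T[J]$; or $X_{T[J]^{-}} \in D \subseteq B(x,r)$, and then because the jump contributes $X_{T[J]} - X_{T[J]^{-}} = Z_J$ (the drift term $-t\bar{\mathbf{b}}$ in (2.7) is continuous and so does not change across the instant of a jump), we have $|X_{T[J]} - X_{T[J]^{-}}| = |Z_J| > 2r$, while every pair of points of $B(x,r)$ is within distance $2r$. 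Hence $X_{T[J]} \notin B(x,r)$, so $X_{T[J]} \notin D$ and $\tau_D \le T[J]$.

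Once the pathwise domination is secured, the remaining work is routine: $J$ is a stopping time for the filtration generated jointly by the i.i.d.\ sequences $\{\eta_i\}$ and $\{Z_i\}$ (and uses only the $Z_i$'s), so Wald's identity applied to the exponential waiting times of rate $\mu$ gives
\begin{equation*}
\mathrm{E}\bigl[T[J]\bigr] \;=\; \mathrm{E}[J]\,\mathrm{E}[\eta_1] \;=\; \frac{1}{p}\cdot\frac{1}{\mu} \;=\; \frac{1}{\mu P(|Z|>2r)}.
\end{equation*}
Combining with $\tau_D \le T[J]$ and taking expectations yields the stated bound.

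The only subtlety I anticipate is the verification that the pathwise argument survives in the asymmetric case (2.7), where the continuous drift $-t\bar{\mathbf{b}}$ is present. This is handled by the observation above that the drift is continuous in $t$, so it cannot prevent the discontinuous $J$-th jump from ejecting the particle; on the other hand, if the drift has already carried the particle out of $D$ before time $T[J]$, then $\tau_D$ is already smaller than $T[J]$. Hence the same domination $\tau_D \le T[J]$ holds in both (2.6) and (2.7), and no case-by-case calculation is needed. The independence of $\{\eta_i\}$ and $\{Z_i\}$ (standard for compound Poisson constructions) is what legitimizes Wald's identity in the final step.
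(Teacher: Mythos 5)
Your proof is correct, and it reaches the bound by a genuinely different route than the paper. Both arguments hinge on the same geometric fact — a jump of magnitude exceeding $2r$ launched from inside a ball of radius $r$ (diameter $2r$) must land outside it — but you package it as a pathwise domination $\tau_D \le T[J]$ by the arrival time of the first "large" jump, and then finish with Wald's identity (legitimate here since $J$ depends only on the $Z_i$'s and is therefore independent of the exponential waiting times). The paper instead works with the survival probability: it conditions on $N(t)$, bounds $P\left(M_n\le r\right)\le \left(P(|Z|\le 2r)\right)^n$, sums the Poisson series to get $P\left(\tau_{B(0,r)}>t\right)\le \mathrm{e}^{-\mu t P(|Z|>2r)}$, and integrates. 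Your approach is slightly more economical for the expectation alone, and in fact gives more than you claim: by Poisson thinning, $T[J]$ is exactly exponential with rate $\mu P(|Z|>2r)$, so the stochastic domination $\tau_D\preceq T[J]$ immediately recovers the paper's tail bound \eqref{eq:3.2} and hence its Corollary \ref{cor:2} on the exponential moment $\mathrm{E}\left[\mathrm{e}^{C\tau_{B(0,r)}}\right]$; it would be worth stating this explicitly, since the paper reuses \eqref{eq:3.2} later. The one cosmetic mismatch is indexing: the paper's sums start at $i=0$ (so $Z_0$ plays the role of the initial datum), but this does not affect either argument.
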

\begin{proof}
Since $D$ is a bounded domain, there exists a sphere $B(x,r),\ 0<r<\infty$ such that $D$ is its subset. Thus, we have
\begin{equation}\label{eq:3.1}
P\left(\tau_D\le\tau_{B(x,r)}\right)=1.
\end{equation}

For $0<\alpha<1$, computing probabilities by conditioning, we have
\begin{equation*}
P\left(M_{N(t)}\le r\right)=\sum^\infty_{n=0}P\left(M_{n}\le r\right)\mathrm{e}^{-\mu t}\frac{(\mu t)^n}{n!}.
\end{equation*}
Since
\begin{equation*}
P\left(M_{n}\le r\right)\le P\left(\max_{0\le i\le n}|Z_i|\le 2r\right)=\left(P(|Z|\le 2r)\right)^n,
\end{equation*}
there exists
\begin{equation}\label{eq:3.2}
P\left(M_{N(t)}\le r\right)\le \mathrm{e}^{-\mu tP(|Z|>2r)}.
\end{equation}

Noting that $P\left({\tau_{B(0,r)}>t}\right)=P\left(M_{N(t)}\le r\right)$, Eq.\ \eqref{eq:3.2} implies
\begin{equation*}
\begin{aligned}
\mathrm{E}\left[\tau_{B(0,r)}\right]&=\int^\infty_0P\left({\tau_{B(0,r)}>t}\right)\mathrm{d}t\\
&=\int^\infty_0P\left(M_{N(t)}\le r\right)\mathrm{d}t\\
&\le\int^\infty_0\mathrm{e}^{-\mu tP(|Z|>2r)}\mathrm{d}t\\
&=\frac{1}{\mu P(|Z|>2r)}.
\end{aligned}
\end{equation*}

Similarly, when $1<\alpha<2$, we also have
\begin{equation*}
P\left(M_{N(t)}\le r\right)\le P\left(\max_{0\le i\le N(t)}|Z_i|\le 2r\right)= \mathrm{e}^{-\mu tP(|Z|>2r)},
\end{equation*}
which means
\begin{equation*}
\mathrm{E}\left[\tau_{B(0,r)}\right]\le\frac{1}{\mu P(|Z|>2r)}.
\end{equation*}
Combining above and Eq.\ \eqref{eq:3.1} leads to
\begin{equation*}
\mathrm{E}\left[\tau_{D}\right]\le\mathrm{E}\left[\tau_{B(0,r)}\right]\le\frac{1}{\mu P(|Z|>2r)}.
\end{equation*}
\end{proof}

\begin{cor} \label{cor:2}
If $C<\mu P(|Z|>2r)$, we have
\begin{equation}\label{eq:3.3}
\mathrm{E}\left[\mathrm{e}^{C\tau_{B(0,r)}}\right]\le\frac{C}{\mu P(|Z|>2r)-C}+1.
\end{equation}
\end{cor}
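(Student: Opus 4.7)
The plan is to run the same tail bound used in the proof of Theorem \ref{th:1} through the standard layer-cake identity for the expected value of a nonnegative function of $\tau_{B(0,r)}$. Since $\tau_{B(0,r)}\ge 0$, I would start from the identity
\begin{equation*}
\mathrm{E}\!\left[\mathrm{e}^{C\tau_{B(0,r)}}\right]=1+C\int_0^\infty\mathrm{e}^{Ct}\,P\!\left(\tau_{B(0,r)}>t\right)\mathrm{d}t,
\end{equation*}
which follows from Fubini applied to $\mathrm{e}^{C\tau}-1=\int_0^\tau C\mathrm{e}^{Ct}\,\mathrm{d}t$ (equivalently, from the tail-integral formula $\mathrm{E}[Y]=\int_0^\infty P(Y>s)\,\mathrm{d}s$ with the substitution $s=\mathrm{e}^{Ct}$). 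This is the only structural step; everything else is then a direct substitution.

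Next, I would invoke the bound established inside the proof of Theorem \ref{th:1}, namely the key inequality $P(\tau_{B(0,r)}>t)=P(M_{N(t)}\le r)\le\mathrm{e}^{-\mu t P(|Z|>2r)}$, which holds in both regimes $0<\alpha<1$ and $1<\alpha<2$. Plugging this into the previous display yields
\begin{equation*}
\mathrm{E}\!\left[\mathrm{e}^{C\tau_{B(0,r)}}\right]\le 1+C\int_0^\infty\mathrm{e}^{-(\mu P(|Z|>2r)-C)t}\,\mathrm{d}t.
\end{equation*}

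Finally, the assumption $C<\mu P(|Z|>2r)$ makes the exponent in the integrand strictly negative, so the integral converges and equals $1/(\mu P(|Z|>2r)-C)$. Collecting the constant $C$ in front gives exactly the stated bound $C/(\mu P(|Z|>2r)-C)+1$.

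There is no real obstacle here: the only thing to be careful about is the strict inequality in the hypothesis, which is precisely what guarantees integrability on $[0,\infty)$; if $C\ge\mu P(|Z|>2r)$ the right-hand side of the layer-cake identity could diverge and the exponential moment need not be finite. Because the tail bound from Theorem \ref{th:1} is uniform in $\alpha$ across the two cases covered by Eqs.\ \eqref{eq:2.6}--\eqref{eq:2.7}, no case split is needed in this corollary.
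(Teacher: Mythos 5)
Your proposal is correct and is essentially the paper's own argument: the paper bounds $\int_0^\infty \mathrm{e}^{Ct}P(\tau_{B(0,r)}>t)\,\mathrm{d}t$ by $1/(\mu P(|Z|>2r)-C)$ using the tail estimate from Theorem \ref{th:1} and then converts this to $\mathrm{E}[\mathrm{e}^{C\tau_{B(0,r)}}]$ via integration by parts, which is exactly your layer-cake identity rearranged. No substantive difference.
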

\begin{proof}
The proof of Theorem 1 shows that
\begin{equation}\label{eq:3.4}
\begin{aligned}
\int^\infty_0\mathrm{e}^{Ct}P\left({\tau_{B(0,r)}>t}\right)\mathrm{d}t&=\int^\infty_0\mathrm{e}^{Ct}P\left(M_{N(t)}\le r\right)\\
&\le\int^\infty_0\mathrm{e}^{Ct}\mathrm{e}^{-\mu tP(|Z|>2r)}\mathrm{d}t\\
&=\frac{1}{\mu P(|Z|>2r)-C}.
\end{aligned}
\end{equation}
Let $f(t)$ be the PDF of $\tau_{B(0,r)}$. Making integration by parts leads to
\begin{equation*}
\begin{aligned}
\int^\infty_0\mathrm{e}^{Ct}P\left({\tau_{B(0,r)}>t}\right)\mathrm{d}t &=-\frac{1}{C}+\frac{1}{C}\int^\infty_0\mathrm{e}^{Ct}f(t)\mathrm{d}t
\\
&
=-\frac{1}{C}+\frac{1}{C}\mathrm{E}\left[\mathrm{e}^{C\tau_{B(0,r)}}\right]\le\frac{1}{\mu P(|Z|>2r)-C},
\end{aligned}
\end{equation*}
which results in
\begin{equation*}
\mathrm{E}\left[\mathrm{e}^{C\tau_{B(0,r)}}\right]\le\frac{C}{\mu P(|Z|>2r)-C}+1.
\end{equation*}
\end{proof}
\begin{remark}
If $D$ is a bounded domain and $P(|Z|>2r)>0$, then all the moments of the first exit time of compound Poisson processes are finite. The PDF of $\tau_{B(0,r)}$ decays exponentially when $t$ is large enough.
The power of exponential decay becomes smaller for bigger $\alpha$ when $\alpha>1$;
refer to Fig. \ref{fig:1} (for the details of simulation, see appendix).
\end{remark}


\captionsetup[figure]{name={Fig.}}
\begin{figure}[H]
\centering
\includegraphics[scale=0.45]{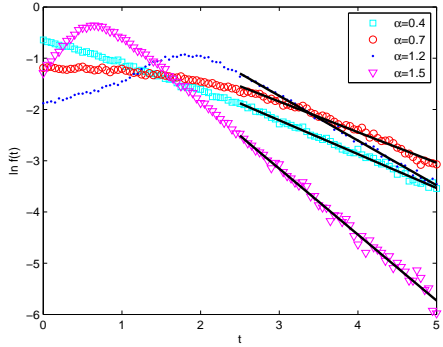}
\caption{PDF of $\tau_{B(0,1)}$ with $\lambda=0.01$ and $d=2$ for $\alpha=0.4$ (turquoise square), $\alpha=0.7$ (red circle), $\alpha=1.2$ (blue point), and $\alpha=1.5$ (magenta triangle). The parameters are: sample number $n=1.5\times10^5$, $X_0=[0,\ 0]$, $\Delta t=5\times10^{-4}$, $b=10$, $m(\theta)=1/{4\pi}$ for $\arg(\theta)\in(0,\ \pi)$ and $m(\theta)=3/{4\pi}$ for $\arg(\theta)\in(\pi,\ 2\pi)$.
}\label{fig:1}
\end{figure}

Another useful observation is the first exit position \cite{46}.
According to the compound Poisson processes, we estimate $\mathrm{E}\left[\left|X_{\tau_{B(0,r)}}\right|\right]$, where $\mathrm{E}$ denotes the average over space and time when the r.v. is $X_{\tau_{B(0,r)}}$.


\begin{prop} \label{prop:3}
Let $r>0$, $\xi>0$, $X_t=\sum^{N(t)}_{i=0}Z_i$ be compound Poisson process and $Z,\, Z_1, \, Z_2, \cdots$ be i.i.d.. Then
\begin{equation*}
P\left(|Z|>2r+\xi\right)G(\mu)\le P\left(\left|X_{\tau_{B(0,r)}}\right|-r>\xi\right)\le P\left(|Z|>\xi\right)G(\mu),
\end{equation*}
where $G(\mu)=\int^\infty_0\left(1-\mathrm{e}^{-\mu t}\right)f(t)\mathrm{d}t$, and $f(t)$ is the PDF of $\tau_{B(0,r)}$.
\end{prop}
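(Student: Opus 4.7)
The plan is to exploit the compound Poisson structure of $X_t$ and single out the jump that precipitates the exit. Let $K=N(\tau_{B(0,r)})$ denote its index, so that the pre-exit position $X_{\tau^-}=\sum_{i=0}^{K-1}Z_i$ satisfies $|X_{\tau^-}|\le r$ and $X_\tau=X_{\tau^-}+Z_K$.

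First, I will reduce the statement to one about the magnitude of the exit jump. The triangle and reverse triangle inequalities, together with $|X_{\tau^-}|\le r$, yield $|Z_K|-r\le |X_\tau|\le r+|Z_K|$, and hence the deterministic inclusions
\[\{|Z_K|>2r+\xi\}\ \subseteq\ \{|X_{\tau_{B(0,r)}}|-r>\xi\}\ \subseteq\ \{|Z_K|>\xi\}.\]
Taking probabilities reduces the proposition to establishing the identity $P(|Z_K|>y)=P(|Z|>y)\,G(\mu)$, applied with $y=\xi$ for the upper bound and $y=2r+\xi$ for the lower bound.

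To establish the identity, I will exploit that in a compound Poisson process the jump magnitudes $(Z_i)$ are i.i.d.\ copies of $Z$ and are independent of the Poisson arrival times $(T_k)$, while $K$ is a functional of $(Z_i)$ alone. Decomposing
\[P(|Z_K|>y)=\sum_{k\ge 1}P(K=k,\,|Z_k|>y),\]
and using that $Z_k$ is independent of $(Z_1,\dots,Z_{k-1})$ (and hence of the event $\{K\ge k\}$), the magnitude marginal should factor off as $P(|Z|>y)$. The remaining clock contribution, once integrated against the density $f$ of $\tau_{B(0,r)}$, is to be assembled via the Poisson identity $P(N(t)\ge 1)=1-e^{-\mu t}$ into the factor $\int_0^\infty(1-e^{-\mu t})f(t)\,dt=G(\mu)$.

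I expect the critical difficulty to be in this last step: $K$ also depends on $Z_k$ itself through the exit condition $|X_{T_{k-1}}+Z_k|>r$, so a naive independence argument produces only the weaker Wald-type estimate $P(|Z|>y)\,\mu\,\mathrm{E}[\tau_{B(0,r)}]$. Sharpening this to $P(|Z|>y)\,G(\mu)$ requires a more careful bookkeeping of the joint law of $(K,T_K)$ — in particular, pairing the terms of the decomposition with the arrival-time density of $T_k$ so that the exact identity for $1-e^{-\mu t}$, rather than its linear upper bound $\mu t$, emerges upon summation.
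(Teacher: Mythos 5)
Your sandwich step is sound and is in fact the paper's key inequality (its Eq.~(3.5)): since the pre-exit position $x$ satisfies $|x|\le r$, the triangle inequality gives $P(|Z|>2r+\xi)\le P(|Z_n+x|>r+\xi)\le P(|Z|>\xi)$. But the route you then take --- reducing everything to the overshooting jump $Z_K$ and claiming the exact identity $P(|Z_K|>y)=P(|Z|>y)\,G(\mu)$ --- contains a genuine gap, and it is precisely the one you flag yourself without resolving. The identity is false: $Z_K$ is size-biased, because conditional on jump $k$ being the exit jump from pre-jump position $x$ it must satisfy $|x+Z_k|>r$, so its conditional law is not that of $Z$ and no bookkeeping of the arrival times $T_k$ will repair this --- the obstruction sits in the jump-size marginal, not in the clock. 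Your proposed fix (pairing with the arrival-time density of $T_k$ to produce $1-\mathrm{e}^{-\mu t}$) therefore does not address the actual difficulty, and the proof as written does not close.

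The paper avoids this trap by never isolating the law of $|Z_K|$. It conditions on $\tau_{B(0,r)}=t$ and on $N(t)=n$, writes the exit probability as $\int_{|x|\le r}P(|Z_n+x|>r+\xi)\,P(M_{n-1}\le r,\ \sum_{i=0}^{n-1}Z_i\in\mathrm{d}x)$, applies the uniform sandwich bound to the integrand \emph{inside} this decomposition, and only then sums the remaining Poisson weights $\sum_{n\ge1}\mathrm{e}^{-\mu t}(\mu t)^n/n!=1-\mathrm{e}^{-\mu t}$ against $f(t)$ to produce $G(\mu)$. In other words, $G(\mu)$ arises as the total mass of the conditional decomposition after the jump-size factor has been bounded by a constant, not as a marginal probability attached to an exact distributional identity for the exit jump. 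To complete your argument you would need to follow the same pattern: insert the inclusions $\{|Z_K|>2r+\xi\}\subseteq\{|X_{\tau}|-r>\xi\}\subseteq\{|Z_K|>\xi\}$ at the level of the conditional probabilities given the past and the pre-jump position, and bound there, rather than attempting to factor $P(|Z_K|>y)$.
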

\begin{proof}
$P\left(\left|X_{T_{B(0,r)}}\right|-r>\xi\right)$
\begin{equation*}
\begin{aligned}
&=\int^\infty_0P\left(\left|X_t\right|>r+\xi\right)f(t)\mathrm{d}t\\
&=\int^\infty_0\sum^\infty_{n=1}P\left(\left|\sum^n_{i=0}Z_i\right|>r+\xi\right)\mathrm{e}^{-\mu t}\frac{(\mu t)^n}{n!}f(t)\mathrm{d}t\\
&=\int^\infty_0\sum^\infty_{n=1}\int_{|x|\le r}P\left(\left|Z_n+x\right|>r+\xi\right)P\left(M_{n-1}\le r,\ \sum^{n-1}_{i=0}Z_i\in\mathrm{d}x\right)\mathrm{e}^{-\mu t}\frac{(\mu t)^n}{n!}f(t)\mathrm{d}t.
\end{aligned}
\end{equation*}
On the other hand, for $|x|\le r$, we have
\begin{equation}\label{eq:3.5}
P\left(\left|Z\right|>2r+\xi\right)\le P\left(\left|Z_n+x\right|>r+\xi\right)\le P\left(\left|Z\right|>\xi\right).
\end{equation}
Then
\begin{equation*}
P\left(\left|X_{T_{B(0,r)}}\right|-r>\xi\right)\le P\left(|Z|>\xi\right)\int^\infty_0\left(1-\mathrm{e}^{-\mu t}\right)f(t)\mathrm{d}t
\end{equation*}
and
\begin{equation*}
P\left(\left|X_{T_{B(0,r)}}\right|-r>\xi\right)\ge P\left(|Z|>2r+\xi\right)\int^\infty_0\left(1-\mathrm{e}^{-\mu t}\right)f(t)\mathrm{d}t.
\end{equation*}
The proof is completed.
\end{proof}

By Proposition \ref{prop:3}, for $\xi>1$, there exists
\begin{equation}\label{eq:3.6}
P\left(|Z|>r(1+\xi)\right)G(\mu)\le P\left(\frac{\left|X_{\tau_{B(0,r)}}\right|}{r}>\xi\right)\le P\left(|Z|>r(\xi-1)\right)G(\mu).
\end{equation}

While for $X_t=\sum^{N(t)}_{i=0}Z_i-t\mathbf{\bar{b}}$, the estimate of $P\left(\left|X_{\tau_{B(0,r)}}\right|-r>\xi\right)$ is slightly different. Because there are two ways to escape from the sphere $B(0,r)$, i.e., the escape is due to the shifted term $t\mathbf{\bar{b}}$ or the jump $Z_{N(t)}$. Let $J^c$ and $J$ represent two escape modes, respectively.

Define
\begin{equation*}
J^c=\{T\left[N(\tau_{B(0,r)})\right]<\tau_{B(0,r)}\}, \qquad  J=\{T\left[N(\tau_{B(0,r)})\right]=\tau_{B(0,r)}\}.
\end{equation*}
\begin{cor} \label{cor:4}
For $r>0$, $\xi> 0$, $X_t=\sum^{N(t)}_{i=0}Z_i-t\mathbf{\bar{b}}$, we have
\begin{equation*}
P\left(\left|X_{\tau_{B(0,r)}}\right|-r>\xi\right)\le P\left(|Z|>\xi\right)G(\mu).
\end{equation*}
\end{cor}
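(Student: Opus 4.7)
The plan is to mirror the proof of Proposition~\ref{prop:3} while splitting the exit event according to its two modes $J$ and $J^c$. I would first write
\begin{equation*}
P\left(\left|X_{\tau_{B(0,r)}}\right|-r>\xi\right)=P\left(\left|X_{\tau_{B(0,r)}}\right|-r>\xi,\,J\right)+P\left(\left|X_{\tau_{B(0,r)}}\right|-r>\xi,\,J^c\right),
\end{equation*}
and handle the two terms separately.

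On $J^c$ the exit happens strictly between two consecutive jump times. On any such interval $[T[n-1],T[n])$ the path $s\mapsto\sum_{i=0}^{n-1}Z_i-s\mathbf{\bar b}$ is continuous, so the first passage through $\partial B(0,r)$ places $X_{\tau_{B(0,r)}}$ exactly on the sphere, giving $|X_{\tau_{B(0,r)}}|-r=0$. For $\xi>0$ this event therefore contributes zero. On $J$ the exit coincides with a jump time $T[n]=\tau_{B(0,r)}$ for some $n\ge 1$; just before this jump $|X_{T[n]^-}|\le r$ and just after it $|X_{T[n]^-}+Z_n|>r+\xi$, which forces $|Z_n|>\xi$ by the triangle inequality. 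Running the same Poisson conditioning as in Proposition~\ref{prop:3}, conditioning on the number of previous jumps and on the pre-jump location $x$ with $|x|\le r$ and using the upper bound from Eq.~\eqref{eq:3.5}, pulls the factor $P(|Z|>\xi)$ outside the integral; integrating against $f(t)$ with the Poisson weight for $\{N(t)\ge 1\}$ then produces the factor $G(\mu)=\int_0^\infty(1-\mathrm{e}^{-\mu t})f(t)\mathrm{d}t$. Combining the two cases yields the claimed inequality.

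The only delicate point is checking independence in the conditional decomposition: the pre-jump state $X_{T[n]^-}$ is a function of $Z_0,\ldots,Z_{n-1}$, $T[n]$, and the deterministic drift, and must be independent of the incoming jump $Z_n$. This is built into the definition of the compound Poisson process, where the jump sizes are i.i.d.\ and independent of the arrival process $\{T[k]\}$. Beyond this, the only ingredient genuinely new compared with Proposition~\ref{prop:3} is the geometric observation that continuous exits cannot contribute to an overshoot, so the whole overshoot event sits on $J$ and the same bound as in the driftless case carries over.
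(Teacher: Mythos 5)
Your proposal is correct and follows essentially the same route as the paper: split the exit event into the jump mode $J$ and the drift mode $J^c$, observe that the continuous drift exit lands exactly on $\partial B(0,r)$ so $J^c$ contributes nothing to the overshoot event, and then run the same Poisson conditioning and the bound $P\left(\left|Z_n+x\right|>r+\xi\right)\le P\left(\left|Z\right|>\xi\right)$ from Proposition \ref{prop:3} to extract the factor $P(|Z|>\xi)G(\mu)$. No gaps; your explicit remark on the independence of the incoming jump from the pre-jump state is a point the paper leaves implicit.
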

\begin{proof}
Because of the continuity of the shifted term $t\mathbf{\bar{b}}$, we have
\begin{equation*}
\begin{aligned}
P\left(\left|X_{\tau_{B(0,r)}}\right|-r>\xi\right)&=P\left(\left|X_{\tau_{B(0,r)}}\right|-r>\xi,J\right)+P\left(\left|X_{\tau_{B(0,r)}}\right|-r>\xi,J^c\right)\\
&=P\left(\left|X_{\tau_{B(0,r)}}\right|-r>\xi,J\right),
\end{aligned}
\end{equation*}
which implies that

$P\left(\left|X_{\tau_{B(0,r)}}\right|-r>\xi\right)$
\begin{equation*}
\begin{aligned}
&=\int^\infty_0\sum^\infty_{n=1}P\left(\left|\sum^n_{i=0}Z_i-t\mathbf{\bar{b}}\right|>r+\xi,J\right)\mathrm{e}^{-\mu t}\frac{(\mu t)^n}{n!}f(t)\mathrm{d}t\\
&=\int^\infty_0\sum^\infty_{n=1}\int_{|x|\le r}P\left(\left|Z_n+x\right|>r+\xi,J\right)P\left(\max_{0\le s<t}|X_s|\le r,\ \sum^{n-1}_{i=0}Z_i-t\mathbf{\bar{b}}\in\mathrm{d}x\right)\mathrm{e}^{-\mu t}\frac{(\mu t)^n}{n!}f(t)\mathrm{d}t.
\end{aligned}
\end{equation*}
For $|x|\le r$,
\begin{equation}\label{eq:3.7}
P\left(\left|Z_n+x\right|>r+\xi,J\right)\le P\left(\left|Z_n+x\right|>r+\xi\right)\le P\left(\left|Z\right|>\xi\right).
\end{equation}
Then, we have
\begin{equation*}
P\left(\left|X_{\tau_{B(0,r)}}\right|-r>\xi\right)\le P\left(|Z|>\xi\right)\int^\infty_0\left(1-\mathrm{e}^{-\mu t}\right)f(t)\mathrm{d}t.
\end{equation*}
The proof is completed.
\end{proof}

By Corollary \ref{cor:4}, for $\xi>1$, there exists
\begin{equation}\label{eq:3.8}
P\left(\frac{\left|X_{\tau_{B(0,r)}}\right|}{r}>\xi\right)\le P\left(|Z|>r(\xi-1)\right)G(\mu).
\end{equation}


These two results Eq.\ \eqref{eq:3.6} and Eq.\ \eqref{eq:3.8} lead to the estimates of the moments of $\left|X_{\tau_{B(0,r)}}\right|$.

\begin{thm} \label{th:5} 
Let $0<q<\infty$, $0<r<\infty$, and $X_t$ be the stochastic process  $\mathrm{Eq}$.\ \eqref{eq:2.6} or $\mathrm{Eq}$.\ \eqref{eq:2.7}. If $\mathrm{E}\left[|Z|^q\right]<\infty$, then
 \begin{displaymath}
\mathrm{E}\left[\left|X_{\tau_{B(0,r)}}\right|^q\right]<\infty.
 \end{displaymath}
\end{thm}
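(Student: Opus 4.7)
The plan is to bound the $q$-th moment of $\left|X_{\tau_{B(0,r)}}\right|$ through its tail, combining the standard layer-cake identity
\begin{equation*}
\mathrm{E}\left[Y^q\right]=q\int_0^\infty y^{q-1}P(Y>y)\,\mathrm{d}y
\end{equation*}
for a nonnegative random variable $Y$ with the tail estimates Eq.\ \eqref{eq:3.6} (Case I) and Eq.\ \eqref{eq:3.8} (Case II), which have already been established. The hypothesis $\mathrm{E}[|Z|^q]<\infty$ will enter exactly once, at the very end, when the bound produced for $\left|X_{\tau_{B(0,r)}}\right|$ is recognised as a multiple of the $q$-th moment of $|Z|$.

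First I would apply the identity to $Y=\left|X_{\tau_{B(0,r)}}\right|$ and substitute $y=r\xi$ to obtain
\begin{equation*}
\mathrm{E}\left[\left|X_{\tau_{B(0,r)}}\right|^q\right]=qr^q\int_0^\infty \xi^{q-1}P\left(\frac{\left|X_{\tau_{B(0,r)}}\right|}{r}>\xi\right)\mathrm{d}\xi,
\end{equation*}
and split the $\xi$-integral at $\xi=2$. On $[0,2]$ the integrand is bounded by $\xi^{q-1}$ (probabilities are at most one), so this piece contributes at most $r^q 2^q$.

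For the tail $\xi\in[2,\infty)$, both Eq.\ \eqref{eq:3.6} and Eq.\ \eqref{eq:3.8} deliver the uniform upper bound
\begin{equation*}
P\left(\frac{\left|X_{\tau_{B(0,r)}}\right|}{r}>\xi\right)\le G(\mu)\,P(|Z|>r(\xi-1)).
\end{equation*}
Since $\xi\ge 2$ implies $\xi-1\ge\xi/2$, I would further dominate this by $G(\mu)P(|Z|>r\xi/2)$ and substitute $v=r\xi/2$ to reduce the tail integral to
\begin{equation*}
qr^q\cdot G(\mu)\cdot\frac{2^q}{r^q}\int_r^\infty v^{q-1}P(|Z|>v)\,\mathrm{d}v\le 2^q G(\mu)\,\mathrm{E}\left[|Z|^q\right],
\end{equation*}
where the last step is another application of the layer-cake identity, now to $|Z|$. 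Summing the two contributions yields $\mathrm{E}\left[\left|X_{\tau_{B(0,r)}}\right|^q\right]\le r^q 2^q+2^q G(\mu)\,\mathrm{E}\left[|Z|^q\right]<\infty$.

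The only delicate point is purely bookkeeping: choosing the splitting threshold ($\xi=2$ works because there $\xi-1$ is comparable to $\xi$) so that, after the change of variables, the remaining integral collapses onto the $q$-th moment of $|Z|$. All the genuine probabilistic content has already been supplied by Proposition \ref{prop:3} and Corollary \ref{cor:4}, so beyond a Fubini-type argument and a routine substitution no further stochastic analysis is required; the same proof handles Case I and Case II simultaneously, since the upper tail bound takes the same form in both.
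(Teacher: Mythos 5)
Your proposal is correct and follows essentially the same route as the paper: the layer-cake identity for $\mathrm{E}\bigl[\left|X_{\tau_{B(0,r)}}\right|^q\bigr]$, a split of the $\xi$-integral at $\xi=2$, the tail bounds Eq.\ \eqref{eq:3.6} and Eq.\ \eqref{eq:3.8} combined with $\xi-1\ge\xi/2$, and a change of variables reducing the tail integral to $\int_r^\infty \xi^{q-1}P(|Z|>\xi)\,\mathrm{d}\xi\le \mathrm{E}[|Z|^q]/q$. The paper performs exactly these steps (leaving the $\xi-1\ge\xi/2$ comparison implicit), so no further comment is needed.
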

\begin{proof}
For $X_t=\sum^{N(t)}_{i=0}Z_i$, by Eq.\ \eqref{eq:3.6}, we have
\begin{equation*}
\begin{aligned}
\frac{\mathrm{E}\left[\left|X_{\tau_{B(0,r)}}\right|^q\right]}{r^q}&=\int^\infty_0q\xi^{q-1}P\left(\frac{\left|X_{\tau_{B(0,r)}}\right|}{r}>\xi\right)\mathrm{d}\xi\\
&\le2^q+q\int^\infty_2\xi^{q-1}P\left(\frac{\left|X_{\tau_{B(0,r)}}\right|}{r}>\xi\right)\mathrm{d}\xi\\
&\le2^q+G(\mu)q\int^\infty_2\xi^{q-1}P\left(\frac{|Z|}{r}>\frac{\xi}{2}\right)\mathrm{d}\xi\\
&=2^q+G(\mu)q(2/r)^q\int^\infty_r\xi^{q-1}P\left(|Z|>\xi\right)\mathrm{d}\xi.
\end{aligned}
\end{equation*}
The proof is completed.
\end{proof}
At the same time, the lower bound of the estimate can also be obtained as
\begin{equation*}
\begin{aligned}
\frac{\mathrm{E}\left[\left|X_{\tau_{B(0,r)}}\right|^q\right]}{r^q}&\ge\int^\infty_1q\xi^{q-1}P\left(\frac{\left|X_{\tau_{B(0,r)}}\right|}{r}>\xi\right)\mathrm{d}\xi\\
&\ge G(\mu)q\int^\infty_1\xi^{q-1}P\left(\frac{|Z|}{r}>2\xi\right)\mathrm{d}\xi\\
&=G(\mu)q(2r)^{-q}\int^\infty_{2r}\xi^{q-1}P\left(|Z|>\xi\right)\mathrm{d}\xi.
\end{aligned}
\end{equation*}

Similarly, for $X_t=\sum^{N(t)}_{i=0}Z_i-t\mathbf{\bar{b}}$, by Eq. \eqref{eq:3.8}, there exists
\begin{equation*}
\frac{\mathrm{E}\left[\left|X_{\tau_{B(0,r)}}\right|^q\right]}{r^q}\le2^q+G(\mu)q(2/r)^q\int^\infty_r\xi^{q-1}P\left(|Z|>\xi\right)\mathrm{d}\xi.
\end{equation*}
Theorem \ref{th:5} shows that $\mathrm{E}\left[\left|X_{\tau_{B(0,r)}}\right|^q\right]<\infty$ for the tempered stable process $X_t$.

\begin{cor} \label{cor:6}
For the tempered stable process $X_t$, if $\beta\le\lambda$, then
\begin{equation*}
\mathrm{E}\left[\exp\left(\beta\left|X_{\tau_{B(0,r)}}\right|\right)\right]<\infty.
\end{equation*}
\end{cor}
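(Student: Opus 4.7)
The plan is to reduce the exponential moment bound for $|X_{\tau_{B(0,r)}}|$ to an exponential moment bound for the single jump $|Z|$ by integrating the tail estimates of Proposition \ref{prop:3} and Corollary \ref{cor:4}, and then to exploit the tempering factor $\mathrm{e}^{-\lambda|y|}$ carried by the L\'evy density.

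First I would use Fubini to write
\begin{displaymath}
\mathrm{E}\bigl[\exp(\beta\left|X_{\tau_{B(0,r)}}\right|)\bigr]=1+\beta\int_0^\infty \mathrm{e}^{\beta t}\,P\bigl(\left|X_{\tau_{B(0,r)}}\right|>t\bigr)\,\mathrm{d}t,
\end{displaymath}
and split the integral at $t=r$. On $[0,r]$ the probability is bounded trivially by $1$, contributing at most $\mathrm{e}^{\beta r}-1$. On $(r,\infty)$ I substitute $t=r+\xi$ with $\xi>0$ and invoke the key tail bound
$P(\left|X_{\tau_{B(0,r)}}\right|-r>\xi)\le G(\mu)\,P(|Z|>\xi),$
which is Proposition \ref{prop:3} for the driftless process Eq.\ \eqref{eq:2.6} and Corollary \ref{cor:4} for the drifted process Eq.\ \eqref{eq:2.7}. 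After pulling out the $\mathrm{e}^{\beta r}$ factor and identifying the remaining tail integral as $\mathrm{E}[\mathrm{e}^{\beta|Z|}]-1$, this step should yield the bound
\begin{displaymath}
\mathrm{E}\bigl[\exp(\beta\left|X_{\tau_{B(0,r)}}\right|)\bigr]\le \mathrm{e}^{\beta r}\Bigl(1+G(\mu)\bigl(\mathrm{E}[\mathrm{e}^{\beta|Z|}]-1\bigr)\Bigr),
\end{displaymath}
so that the corollary reduces to showing $\mathrm{E}[\mathrm{e}^{\beta|Z|}]<\infty$ whenever $\beta\le\lambda$.

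For the latter, I would exploit the fact that the law of $Z$ is proportional to $m(y)\mathrm{e}^{-\lambda|y|}|y|^{-d-\alpha}\mathrm{d}y$. Multiplying by $\mathrm{e}^{\beta|y|}$ produces an integrand controlled by $\mathrm{e}^{(\beta-\lambda)|y|}\le 1$ under the hypothesis $\beta\le\lambda$, which is dominated by the density of $Z$ itself and is therefore integrable. The main obstacle I anticipate is the endpoint case $\beta=\lambda$: here the tempering factor is cancelled exactly and finiteness rests entirely on the integrability of $m(y)|y|^{-d-\alpha}$ at infinity, so one must argue carefully that the large-$|y|$ contribution is still controlled by the total mass of the jump distribution, while the small-$|y|$ behaviour is absorbed into the normalization built into the compound Poisson representation.
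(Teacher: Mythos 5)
Your proposal is correct and follows essentially the same route as the paper: integrate the tail of $\left|X_{\tau_{B(0,r)}}\right|$, apply the tail bounds of Proposition~\ref{prop:3} (for $0<\alpha<1$) and Corollary~\ref{cor:4} (for $1<\alpha<2$) to reduce to an exponential moment of the single jump $|Z|$, and then use the tempering factor $\mathrm{e}^{-\lambda|y|}$ to control $\mathrm{E}[\mathrm{e}^{\beta|Z|}]$ for $\beta\le\lambda$. Your extra care in splitting off the small-$\xi$ (equivalently small-$|y|$) region is in fact needed: the paper's final displayed bound $\int_0^\infty \mathrm{e}^{(\beta-\lambda)\xi_0}\xi_0^{-1-\alpha}\mathrm{d}\xi_0$ diverges at the origin as written, and the argument only closes once one bounds the probability trivially by $1$ there, exactly as you propose.
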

\begin{proof}
For $0<\alpha<1$, according to Proposition \ref{prop:3}, there exists
\begin{equation*}
\begin{aligned}
&\mathrm{E}\left[\exp\left(\beta\left|X_{\tau_{B(0,r)}}\right|\right)\right]\\
&=\mathrm{e}^{\beta r}+\int^\infty_r\beta\mathrm{e}^{\beta \xi}P\left(\left|X_{\tau_{B(0,r)}}\right|>\xi\right)\mathrm{d}\xi\\
&=\mathrm{e}^{\beta r}+\int^\infty_0\beta\mathrm{e}^{\beta (\xi_0+r)}P\left(\left|X_{\tau_{B(0,r)}}\right|-r>\xi_0\right)\mathrm{d}\xi_0\\
&\le \mathrm{e}^{\beta r}+\int^\infty_0\beta\mathrm{e}^{\beta (\xi_0+r)}P\left(\left|Z\right|>\xi_0\right)\mathrm{d}\xi_0\\
&=\frac{c_{m,\alpha}\mathrm{e}^{\beta r}}{{\nu(|Z|>0)}}\int^\infty_0\mathrm{e}^{(\beta-\lambda) \xi_0}\xi_0^{-1-\alpha}\mathrm{d}\xi_0.
\end{aligned}
\end{equation*}

As $1<\alpha<2$, Corollary \ref{cor:4} implies
\begin{equation*}
\begin{aligned}
&\mathrm{E}\left[\exp\left(\beta\left|X_{\tau_{B(0,r)}}\right|\right)\right]\\
&\le\frac{c_{m,\alpha}\mathrm{e}^{\beta r}}{{\nu(|Z|>0)}}\int^\infty_0\mathrm{e}^{(\beta-\lambda) \xi_0}\xi_0^{-1-\alpha}\mathrm{d}\xi_0.
\end{aligned}
\end{equation*}
\end{proof}

Corollary \ref{cor:6} also shows that the PDF $p(x)$ of $\left|X_{\tau_{B(0,r)}}\right|$ decays exponentially, as $\left|X_{\tau_{B(0,r)}}\right|$ becomes large, confirmed by \figurename \ \ref{fig:2}.

\captionsetup[figure]{name={Fig.}}
\begin{figure}[H]
\centering
\includegraphics[scale=0.45]{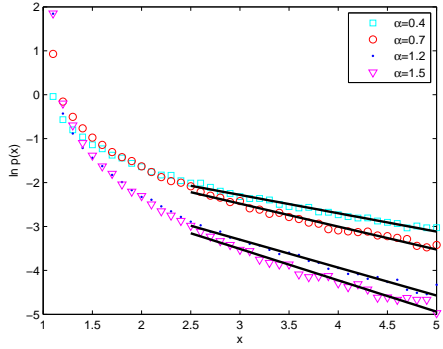}
\caption{Simulation of PDF of $\left|X_{\tau_{B(0,1)}}\right|$ with $\lambda=0.01$ and $d=2$ for  $\alpha=0.4$ (turquoise square), $\alpha=0.7$ (red circle), $\alpha=1.2$ (blue point), and $\alpha=1.5$ (magenta triangle). The other parameters are the same as those in \figurename \ \ref{fig:1}.}\label{fig:2}
\end{figure}

Generally, the moments of the tempered stable process $X_t$ are closely related to $t$. So is there a similar relationship between $\left|X_{\tau_D}\right|$ and $\tau_D$?
Firstly, let's discuss the symmetric tempered stable process; the following proposition demonstrates that the second moment of the process $|X_t|$ linearly increases with $t$.
\begin{prop}\label{lem-1}
If $X_t$ is the symmetric tempered stable process and $X_0=0$, then the second moment of it is independent of the dimension $d$, and there is
\begin{equation*}
\mathrm{E}\left[\left|X_t\right|^2\right]=\alpha|1-\alpha|\lambda^{\alpha-2}t.
\end{equation*}
\end{prop}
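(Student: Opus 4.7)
The plan is to read the second moment off the characteristic function $\hat p(k,t)=e^{t\psi(k)}$ by differentiating twice at $k=0$. Using the standard identity
\[
\mathrm{E}\bigl[|X_t|^2\bigr] = -\Delta_k\hat p(k,t)\bigm|_{k=0},
\]
combined with $\psi(0)=0$ and $\nabla_k\psi(0)=0$ (the latter from symmetry of $m$, which makes the defining integrand odd in $y$), the exponential factors cancel and we are left with
\[
\mathrm{E}\bigl[|X_t|^2\bigr] = -t\,\Delta_k\psi(k)\bigm|_{k=0}.
\]
So the whole problem reduces to evaluating the Laplacian of the L\'evy symbol at the origin.

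Second, I would simplify $\psi$. For symmetric $m$ the two regimes collapse into a single expression: in Eq.\ \eqref{eq:2.4} the imaginary part $\int i\sin(k\cdot y)\,\nu(dy)$ vanishes by the oddness of the integrand, and in the range $1<\alpha<2$ the process is still covered by Case I (because $m$ is symmetric), so one may drop both the drift term $\mathbf b=\int\phi\,m(\phi)d\phi=0$ and the principal-value correction $i(k\cdot y)\chi_{\{|y|<1\}}$, the latter for the same oddness reason. In both subcases
\[
\psi(k) = c_{m,\alpha}\int_{\mathrm{R}^d\setminus\{0\}}\bigl[\cos(k\cdot y)-1\bigr]\frac{m(y)}{e^{\lambda|y|}|y|^{d+\alpha}}\,dy.
\]

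Third, I would differentiate twice under the integral sign (justified by dominated convergence, since $|y|^2 m(y)/(e^{\lambda|y|}|y|^{d+\alpha})$ behaves like $|y|^{2-d-\alpha}$ near the origin, integrable because $\alpha<2$, and decays exponentially at infinity). This yields
\[
\Delta_k\psi(0) = -c_{m,\alpha}\int_{\mathrm{R}^d\setminus\{0\}}|y|^2\,\frac{m(y)}{e^{\lambda|y|}|y|^{d+\alpha}}\,dy.
\]
Polar coordinates $y=r\theta$ factor this into the angular integral $\int_{|\theta|=1}m(\theta)\,d\theta=1$ and the radial integral $\int_0^\infty r^{1-\alpha}e^{-\lambda r}\,dr = \lambda^{\alpha-2}\Gamma(2-\alpha)$. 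The dimension $d$ disappears cleanly at precisely this step, which is the source of the $d$-independence asserted in the proposition.

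Putting the pieces together gives $\mathrm{E}[|X_t|^2] = t\,c_{m,\alpha}\Gamma(2-\alpha)\lambda^{\alpha-2}$. The final identification uses the recursion $\Gamma(2-\alpha) = (1-\alpha)(-\alpha)\Gamma(-\alpha) = \alpha|1-\alpha|\,|\Gamma(-\alpha)|$ (a sign check shows this is valid for both $\alpha\in(0,1)$ and $\alpha\in(1,2)$) together with the normalization convention $c_{m,\alpha}=1/|\Gamma(-\alpha)|$, which is consistent with the isotropic constant $c_\alpha=\Gamma(d/2)/(2\pi^{d/2}|\Gamma(-\alpha)|)$ recalled in Section \ref{sec:2} once $m(\theta)$ is taken uniform on the sphere. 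Multiplying out gives the claimed value $\alpha|1-\alpha|\lambda^{\alpha-2}t$. The only delicate point I anticipate is bookkeeping the sign of $\Gamma(-\alpha)$ across the two ranges of $\alpha$; everything else is a direct characteristic-function computation requiring nothing beyond what the excerpt already supplies.
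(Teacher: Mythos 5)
Your proof is correct, but it takes a genuinely different route from the paper's. The paper works entirely with the compound Poisson representation $X_t=\sum_{i=0}^{N(t)}Z_i$: it conditions on $N(t)=n$, uses the i.i.d.\ property and the symmetry of the jump law to kill the cross terms in $\mathrm{E}\bigl[\bigl|\sum_i Z_i\bigr|^2\bigr]$, multiplies by $\mathrm{E}[N(t)]=\mu t$, and then evaluates $\mathrm{E}[|Z|^2]$ directly from the normalized jump distribution $\nu(\mathrm{d}x)/\nu(|x|>0)$ --- a Wald-type second-moment identity. You instead read the second moment off the characteristic function, reducing everything to $-t\,\Delta_k\psi(0)$ and then to the same radial integral $\int_0^\infty r^{1-\alpha}e^{-\lambda r}\,\mathrm{d}r=\Gamma(2-\alpha)\lambda^{\alpha-2}$ and the same Gamma-function recursion $\Gamma(2-\alpha)=\alpha|1-\alpha|\,|\Gamma(-\alpha)|$ that underlie the paper's evaluation of $\mathrm{E}[|Z|^2]$. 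Your route is arguably cleaner on two points the paper glosses over: it handles the case $1<\alpha<2$ uniformly (you correctly note that symmetric $m$ forces $\mathbf{b}=0$ and kills the compensator, so Case~I applies), and it makes the justification for interchanging limits explicit via dominated convergence, whereas the paper's normalization bookkeeping between $\mu$, $c_{m,\alpha}$ and $\nu(|x|>0)$ is somewhat loose. The paper's route has the advantage of staying inside the compound Poisson framework used throughout Section~3 (the same conditioning argument is reused for the anisotropic moment computations \eqref{eq:3.12}--\eqref{eq:3.15}), while yours generalizes immediately to any L\'evy process with a finite second moment. Your identification of $c_{m,\alpha}=1/|\Gamma(-\alpha)|$ via consistency with the isotropic constant $c_\alpha$ is sound and matches the convention stated in the paper's appendix.
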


\begin{proof}
\begin{equation}\label{eq:3.9}
\begin{aligned}
\mathrm{E}\left[\left|X_t\right|^2\right]&=\sum^\infty_{n=0}\mathrm{E}\left[\left|\sum^{N(t)}_{i=0}Z_i\right|^2\bigg|N(t)=n\right] P\left[N(t)=n\right]\\
&=\sum^\infty_{n=0}\mathrm{E}\left[\left|\sum^n_{i=0}Z_i\right|^2\right] P\left[N(t)=n\right].
\end{aligned}
\end{equation}
The i.i.d. property and symmetry of $\nu(\mathrm{d}x)$ lead to
\begin{equation}\label{eq:3.10}
\begin{aligned}
\mathrm{E}\left[\left|X_t\right|^2\right]&=\mathrm{E}[|Z|^2]\sum^\infty_{n=0}n\mathrm{e}^{-\mu t}\frac{(\mu t)^n}{n!}\\
&=\frac{\alpha|1-\alpha|\lambda^{\alpha-2}}{\nu(|x|>0)}\mu t\\
&=\alpha|1-\alpha|\lambda^{\alpha-2}t,
\end{aligned}
\end{equation}
which completes the proof.
\end{proof}

For the symmetric tempered stable process, $\mathrm{E}\left[|X_t|^2\right]\sim t$. Can we also expect  $\mathrm{E}\left[\left|X_{\tau_{B(0,r)}}\right|^2\right]\sim \mathrm{E}\left[\tau_{B(0,r)}\right]$ \cite{47}? See the following theorem.

\begin{thm}\label{th:8}
Assume that the symmetric stochastic process $X_t$ has the stationary and independent increments, and
\begin{equation*}
\mathrm{E}\left[|X_t|^2\right]=Ct,
\end{equation*}
where $C$ is a constant; $D$ is a bounded domain, and $\mathrm{E}\left[\tau_D\right]<\infty$. Then
\begin{equation*}
\mathrm{E}\left[\left|X_{\tau_D}\right|^2\right]=C \cdot\mathrm{E}\left[\tau_D\right].
\end{equation*}
\end{thm}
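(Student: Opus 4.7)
The plan is a classical martingale-plus-optional-stopping argument. First I would observe that, under the hypotheses, $M_t := |X_t|^2 - Ct$ is a martingale with respect to the natural filtration $(\mathcal{F}_t)$ of $X_t$. For $s<t$, expand $|X_t|^2 = |X_s|^2 + 2\,X_s\cdot(X_t-X_s) + |X_t-X_s|^2$; the independence of $X_t-X_s$ from $\mathcal{F}_s$, symmetry (which yields $\mathrm{E}[X_t-X_s]=0$), and stationarity (which yields $\mathrm{E}[|X_t-X_s|^2]=\mathrm{E}[|X_{t-s}|^2]=C(t-s)$) combine to give $\mathrm{E}[M_t\mid\mathcal{F}_s]=M_s$. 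Note that the hypothesis $\mathrm{E}[|X_t|^2]=Ct$ forces $X_0=0$ a.s., so $M_0=0$.

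Second, since $t\wedge\tau_D$ is a bounded stopping time, optional stopping gives $\mathrm{E}[M_{t\wedge\tau_D}]=0$, i.e.
\[
\mathrm{E}\!\left[|X_{t\wedge\tau_D}|^2\right] \;=\; C\cdot\mathrm{E}[t\wedge\tau_D].
\]
As $t\to\infty$, the right-hand side converges to $C\cdot\mathrm{E}[\tau_D]$ by monotone convergence (with $\mathrm{E}[\tau_D]<\infty$ by hypothesis). It remains to control the left-hand side. For $t<\tau_D$ the path satisfies $X_t\in D$, so $|X_{t\wedge\tau_D}|\le R:=\sup_{x\in D}|x|<\infty$; for $t\ge\tau_D$, $X_{t\wedge\tau_D}=X_{\tau_D}$. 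Thus the pointwise bound $|X_{t\wedge\tau_D}|^2 \le R^2 + |X_{\tau_D}|^2$ holds, and $|X_{t\wedge\tau_D}|^2\to|X_{\tau_D}|^2$ almost surely. Applying Fatou's lemma to the stopped identity already yields $\mathrm{E}[|X_{\tau_D}|^2]\le C\cdot\mathrm{E}[\tau_D]<\infty$, so the dominator is integrable, and dominated convergence upgrades the inequality to the desired equality.

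The step I expect to be the main obstacle is this passage to the limit on the left-hand side: because $X_t$ has jumps, $X_{\tau_D}$ can lie arbitrarily far outside $D$ and $\tau_D$ is typically unbounded, so no deterministic dominator is available. The boundedness of $D$ controls the path only on $[0,\tau_D)$, so the integrability of $|X_{\tau_D}|^2$ has to be bootstrapped from the stopped identity itself via Fatou before dominated convergence can be invoked. Everything else in the argument is the standard martingale calculus that symmetric, stationary-and-independent-increment processes make available.
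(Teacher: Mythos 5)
Your proof is correct and follows essentially the same route as the paper's second argument: showing $M_t=|X_t|^2-Ct$ is a martingale and applying optional stopping (the paper's first argument is a time-discretized version of the same idea, truncating at $\tau(n_0)=\min\{\tau_D,n_0\}$ and letting $n_0\to\infty$). If anything, your version is more careful than the paper's, which invokes Doob's theorem directly at the unbounded time $\tau_D$; your stopping at $t\wedge\tau_D$ followed by the Fatou bootstrap and domination by $R^2+|X_{\tau_D}|^2$ supplies exactly the justification the paper leaves implicit.
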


\begin{proof}
Let $\tau(n_0)=\min\{\tau_D,n_0\}$, which is a finite stopping time. Since the increments are stationary and independent, there is
\begin{equation*}
\begin{aligned}
\mathrm{E}\left[\left|X_{\tau(n_0)}\right|^2\right]&=\lim_{n\to\infty}\mathrm{E}\left[\left|X\left(\frac{1}{n}\right)\mathbf{1}_{\left\{\tau(n_0)\ge\frac{1}{n}\right\}}
+X\left(\frac{2}{n}-\frac{1}{n}\right)\mathbf{1}_{\left\{ \tau(n_0)\ge\frac{2}{n}\right\}}+\cdots\right|^2\right]\\
&=\lim_{n\to\infty}\mathrm{E}\left[\left|X\left(\frac{1}{n}\right)\right|^2\right]\sum^{n_0n}_{i=1}P\left(\tau(n_0)\ge\frac{i}{n}\right)\\
&=C\lim_{n\to\infty}\frac{1}{n}\sum^{n_0n}_{i=1}P\left(\tau(n_0)\ge\frac{i}{n}\right)\\
&=C\int^{n_0}_0P\left(\tau(n_0)\ge t\right)\mathrm{d}t.
\end{aligned}
\end{equation*}
The fact
\begin{equation*}
\lim_{n_0\to\infty}\tau(n_0)=\tau_D,
\end{equation*}
leads to
\begin{equation}\label{eq:3.11}
\lim_{n_0\to\infty}\mathrm{E}\left[\left|X_{\tau(n_0)}\right|^2\right]=\mathrm{E}\left[\left|X_{\tau_D}\right|^2\right]
=C\mathrm{E}\left[\tau_D\right],
\end{equation}
which completes the proof.
\end{proof}

According to Theorem \ref{th:8}, one can get $\mathrm{E}\left[\tau_{B(0,r)}\right]$ of the symmetric Brownian motion. Since
\begin{equation*}
\mathrm{E}\left[\left|X_t\right|^2\right]=2dt,
\end{equation*}
which leads to
\begin{equation*}
\mathrm{E}\left[\tau_{B(0,r)}\right]=\mathrm{E}\left[\left|X_{\tau_{B(0,r)}}\right|^2\right]/{2d}=r^2/{2d}.
\end{equation*}
There's another way to prove Theorem \ref{th:8}. Let
\begin{equation*}
M_t=\left|X_t\right|^2-Ct.
\end{equation*}
Define the natural filtration of the process $X_t$ as
\begin{displaymath}
 \mathcal{G}^X_t=\sigma\left\{X_s: \ 0\leq s\leq t\right\}.
 \end{displaymath}
 Then, we have
 \begin{equation*}
 \begin{aligned}
\mathrm{E}\left[M_t|\mathcal{G}^X_s\right]&=\mathrm{E}\left[\left|X_t-X_s\right|^2|\mathcal{G}^X_s\right]-Ct+\left|X_s\right|^2 \qquad (s\le t)\\
&=\left|X_s\right|^2-Cs\\
&=M_s,
\end{aligned}
\end{equation*}
showing that $M_t$ is a martingale. Thus, using Doob's optional stopping theorem leads to
\begin{equation*}
\mathrm{E}\left[M_{\tau_D}\right]=\mathrm{E}\left[M_0\right]=0,
\end{equation*}
which implies that Theorem \ref{th:8} holds.

For the anisotropic tempered stable process $X_t$, we calculate its second moment. 
In the two dimensional case, define
\begin{equation*}
Z_i=(r_i\cos\theta_i,r_i\sin\theta_i),
\end{equation*}
where the PDF of $r_i$  is
\begin{equation} \label{PDFrad:r}
\frac{\mathrm{e}^{-\lambda r}r^{-1-\alpha}}{\int^\infty_0\mathrm{e}^{-\lambda r}r^{-1-\alpha}\mathrm{d}r},
\end{equation}
and the PDF of $\theta_i$ is $m(\theta)$, defined on $[0,2\pi]$. Note that $r_i$($\theta_i$) is i.i.d. random variable, and $r_i$ and $\theta_i$ are independent of each other.
When $0<\alpha<1$ in (\ref{PDFrad:r}), for Eq.\ \eqref{eq:2.6}, there exists
\begin{equation}\label{eq:3.12}
\begin{aligned}
\mathrm{E}\left[|X_t|^2\right]&=\sum^\infty_{n=0}\mathrm{E}\left[\left|\sum^n_{i=0}Z_i\right|^2\right]P\left[N(t)=n\right]\\
&=\sum^\infty_{n=0}\mathrm{E}\left[\left(\sum^n_{i=0}r_i\cos\theta_i\right)^2+\left(\sum^n_{i=0}r_i\sin\theta_i\right)^2\right]P\left[N(t)=n\right]\\
&=\sum^\infty_{n=0}\left(n\mathrm{E}\left[r^2_1\right]+n(n-1)\left(\mathrm{E}\left[r_1\right]\right)^2\left[\left(\mathrm{E}\left[\cos\theta_1\right]\right)^2
+\left(\mathrm{E}\left[\sin\theta_1\right]\right)^2\right]\right)P\left[N(t)=n\right]\\
&=\mathrm{E}\left[r^2_1\right]\mu t+\left(\mathrm{E}\left[r_1\cos\theta_1\right]\right)^2(\mu t)^2
+\left(\mathrm{E}\left[r_1\sin\theta_1\right]\right)^2(\mu t)^2.
\end{aligned}
\end{equation}
The MSD of $X_t$ is a linear function of $t$:
\begin{equation}\label{eq:3.13}
\begin{aligned}
&\mathrm{E}\left[\left|X_t-\mathrm{E}[X_t]\right|^2\right]\\
&=\sum^\infty_{n=0}\mathrm{E}\left[\left|\sum^n_{i=0}Z_i-\mathrm{E}\left[\sum^n_{i=0}Z_i\right]\right|^2\right]P\left[N(t)=n\right]\\
&=\sum^\infty_{n=0}\mathrm{E}\left[\left(\sum^n_{i=0}\left(r_i\cos\theta_i-\mathrm{E}[r_i\cos\theta_i]\right)\right)^2+\left(\sum^n_{i=0}\left(r_i\sin\theta_i-\mathrm{E}[r_i\sin\theta_i]\right)\right)^2\right]P\left[N(t)=n\right]\\
&=\sum^\infty_{n=0}\left(n\mathrm{E}\left[r^2_1\right]-n\left(\mathrm{E}\left[r_1\right]\right)^2\left[\left(\mathrm{E}\left[\cos\theta_1\right]\right)^2
+\left(\mathrm{E}\left[\sin\theta_1\right]\right)^2\right]\right)P\left[N(t)=n\right]\\
&=\mathrm{E}\left[r^2_1\right]\mu t-\left[\left(\mathrm{E}\left[r_1\cos\theta_1\right]\right)^2
+\left(\mathrm{E}\left[r_1\sin\theta_1\right]\right)^2\right] \mu t.
\end{aligned}
\end{equation}

For the three dimensional case, i.e., $d=3$, $Z_i=(r_i\sin\theta_i\cos\varphi_i,r_i\sin\theta_i\sin\varphi_i,r_i\cos\theta_i)$, and the probability distribution of the radial direction of $X_t$ is $m(\theta,\varphi)$, defined on the domain $[0,\pi]\times[0,2\pi]$. Using the same above steps leads to
\begin{equation}\label{eq:3.14}
\begin{aligned}
&\mathrm{E}\left[|X_t|^2\right]\\
&=\mathrm{E}\left[r^2_1\right]\mu t+\left(\left(\mathrm{E}\left[r_1\cos\theta_1\right]\right)^2
+\left(\mathrm{E}\left[r_1\sin\theta_1\sin\varphi_1\right]\right)^2
+\left(\mathrm{E}\left[r_1\sin\theta_1\cos\varphi_1\right]\right)^2\right)(\mu t)^2
\end{aligned}
\end{equation}
and
\begin{equation}\label{eq:3.15}
\begin{aligned}
&\mathrm{E}\left[\left|X_t-\mathrm{E}[X_t]\right|^2\right]\\
&=\mathrm{E}\left[r^2_1\right]\mu t-\left[\left(\mathrm{E}\left[r_1\cos\theta_1\right]\right)^2
+\left(\mathrm{E}\left[r_1\sin\theta_1\sin\varphi_1\right]\right)^2+\left(\mathrm{E}\left[r_1\sin\theta_1\cos\varphi_1\right]\right)^2\right] \mu t.
\end{aligned}
\end{equation}
When $1<\alpha<2$ in (\ref{PDFrad:r}), for the two and three dimensional cases, $\mathbf{\bar{b}}$, respectively, has the form $(\bar{b}_1,\bar{b}_2)$ and $(\bar{b}_1,\bar{b}_2,\bar{b}_3)$, and we have
\begin{equation*}
\begin{aligned}
\mathrm{E}\left[|X_t|^2\right]&=\sum^\infty_{n=0}\mathrm{E}\left[\left|\sum^n_{i=0}Z_i-t\mathbf{\bar{b}}\right|^2\right]P\left[N(t)=n\right]\\
&=\sum^\infty_{n=0}\mathrm{E}\left[\left(\sum^n_{i=0}r_i\cos\theta_i-t\bar{b}_1\right)^2+\left(\sum^n_{i=0}r_i\sin\theta_i-t\bar{b}_2\right)^2\right]P\left[N(t)=n\right]\\
&=\mathrm{E}\left[r^2_1\right]\mu t+\left(\mathrm{E}\left[r_1\cos\theta_1\right]\right)^2(\mu t)^2
+\left(\mathrm{E}\left[r_1\sin\theta_1\right]\right)^2(\mu t)^2\\
&-2\mu t^2\left(\mathrm{E}\left[r_1(\bar{b}_1\cos\theta_1+\bar{b}_2\sin\theta_1)\right]\right)+(|\mathbf{\bar{b}}|t)^2
\end{aligned}
\end{equation*}
and
\begin{equation*}
\begin{aligned}
&\mathrm{E}\left[|X_t|^2\right]\\
&=\mathrm{E}\left[r^2_1\right]\mu t+\left(\left(\mathrm{E}\left[r_1\cos\theta_1\right]\right)^2
+\left(\mathrm{E}\left[r_1\sin\theta_1\sin\varphi_1\right]\right)^2
+\left(\mathrm{E}\left[r_1\sin\theta_1\cos\varphi_1\right]\right)^2\right)(\mu t)^2\\
&-2\mu t^2\left(\mathrm{E}\left[r_1(\bar{b}_1\cos\theta_1+\bar{b}_2\sin\theta_1\sin\varphi_1+\bar{b}_3\sin\theta_1\cos\varphi_1)\right]\right)+(|\mathbf{\bar{b}}|t)^2.
\end{aligned}
\end{equation*}
The MSDs of $|X_t|$ with $1<\alpha<2$ in the two and three dimensional cases are, respectively, the same as the ones of $|X_t|$ with $0<\alpha<1$, i.e., Eq.\ \eqref{eq:3.13} and Eq.\ \eqref{eq:3.15}.
When $d\ge4$, one can similarly get the second moment and MSD of $X_t$ as
\begin{equation*}
\mathrm{E}\left[|X_t|^2\right]=\mathrm{E}\left[r^2_1\right]\mu t+C_1t^2
\end{equation*}
and
\begin{equation*}
\mathrm{E}\left[\left|X_t-\mathrm{E}[X_t]\right|^2\right]=C_2t,
\end{equation*}
where $C_1$ and $C_2$ vary with $\alpha,\ \lambda$, and the dimension.




The following theorem answers the relationship between the mean of first exit time $\tau_D$ and the moment/MSD of $X_{\tau_D}$.

\begin{thm}\label{th:9}
If the anisotropic stochastic process $X_t$ has the stationary and independent increments, and
\begin{equation*}
\mathrm{E}\left[X_t\right]=\mathbf{c}t
\end{equation*}
or
\begin{equation*}
\mathrm{E}\left[\left|X_t-\mathrm{E}[X_t]\right|^2\right]=Ct,
\end{equation*}
where $\mathbf{c}$ is a vector, and the constant $C$ depends on the dimension $d$. When $D$ is a bounded domain and $\mathrm{E}\left[\tau_D\right]<\infty$. Then
\begin{equation*}
\mathrm{E}\left[\tau_D\right]=\frac{1}{|\mathbf{c}|}\left|\mathrm{E}\left[X_{\tau_D}\right]\right|\\
\end{equation*}
or
\begin{equation*}
\mathrm{E}\left[\tau_D\right]=\frac{1}{C}\mathrm{E}\left[\left|X_{\tau_D}-\mathrm{E}\left[X_{\tau_D}\right]\right|^2\right].
\end{equation*}
\end{thm}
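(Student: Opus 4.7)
The plan is to mirror the martingale plus optional-stopping route sketched at the end of Theorem~\ref{th:8}, but now for both conclusions of Theorem~\ref{th:9}. For the mean identity, I would set $M_t := X_t - \mathbf{c}t$; the stationary and independent increments together with $\mathrm{E}[X_t]=\mathbf{c}t$ give $\mathrm{E}[M_t-M_s\mid \mathcal{G}^X_s]=0$ for $s\le t$, so each coordinate of $M_t$ is a scalar martingale. For the variance identity, I would set $Y_t := X_t-\mathrm{E}[X_t]$, which also has stationary independent increments and zero mean (the stationary/independent increments property alone forces $\mathrm{E}[X_t]$ to be linear in $t$, so $Y_t$ is well defined), and define $N_t := |Y_t|^2-Ct$. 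Expanding $|Y_t|^2=|Y_s|^2+2\langle Y_s,Y_t-Y_s\rangle+|Y_t-Y_s|^2$ and using independence of the increment $Y_t-Y_s$ from $\mathcal{G}^X_s$ together with stationarity yields $\mathrm{E}[N_t\mid \mathcal{G}^X_s]=N_s$, so $N_t$ is a scalar martingale.

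The next step is to apply Doob's optional stopping theorem at the bounded stopping time $\tau(n_0):=\min\{\tau_D,n_0\}$ already used in Theorem~\ref{th:8}. This gives, coordinatewise and scalarwise respectively,
\begin{equation*}
\mathrm{E}\!\left[X_{\tau(n_0)}\right]=\mathbf{c}\,\mathrm{E}[\tau(n_0)]\qquad\text{and}\qquad \mathrm{E}\!\left[|Y_{\tau(n_0)}|^2\right]=C\,\mathrm{E}[\tau(n_0)].
\end{equation*}
I would then send $n_0\to\infty$. Monotone convergence delivers $\mathrm{E}[\tau(n_0)]\uparrow \mathrm{E}[\tau_D]$, which is finite by hypothesis. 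For the left-hand sides I plan to invoke dominated convergence against a single majorant built from $\sup_{s\le \tau_D}|X_s|$ (respectively its square): this supremum is controlled by $|X_{\tau_D}|$ plus the accumulated drift $|\mathbf{\bar b}|\tau_D$ and the sum of absolute jumps up to $\tau_D$, all of which have all moments finite by Theorem~\ref{th:5} combined with $\mathrm{E}[|Z|^q]<\infty$ for every $q>0$ (a consequence of the exponential tempering in $\nu$). Taking Euclidean norms in the first limit gives $|\mathrm{E}[X_{\tau_D}]|=|\mathbf{c}|\,\mathrm{E}[\tau_D]$; dividing by $|\mathbf{c}|$ and by $C$ in the respective cases produces the two identities of the theorem.

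The main obstacle is precisely this uniform integrability step: at time $\tau_D$ the process has just jumped out of $D$, so $X_{\tau_D}$ can sit well outside $D$, and without the exponential tempering in the L\'evy measure there would be no a priori moment control. Fortunately this is exactly what Theorem~\ref{th:5} and Corollary~\ref{cor:6} supply, so the limit exchange is legitimate. An alternative route, closer to the discretization calculation already written out in Theorem~\ref{th:8}, would bypass optional stopping by writing $X_{\tau(n_0)}$ and $|Y_{\tau(n_0)}|^2$ as telescoping sums $\sum_i (X_{i/n}-X_{(i-1)/n})\mathbf{1}_{\{\tau(n_0)\ge i/n\}}$ along a mesh of size $1/n$, evaluating expectations termwise via stationarity (using $\mathrm{E}[X_{1/n}]=\mathbf{c}/n$ and $\mathrm{E}[|Y_{1/n}|^2]=C/n$, with cross terms vanishing by independence and zero mean), and then letting $n\to\infty$ followed by $n_0\to\infty$; the integrability argument needed at the end is the same.
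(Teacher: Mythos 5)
Your proposal is correct and follows essentially the same route as the paper: the paper's proof of Theorem~\ref{th:9} defines the martingale $M_t=\left|X_t-\mathrm{E}[X_t]\right|^2-Ct$ (and analogously $X_t-\mathbf{c}t$ for the mean identity), verifies the martingale property from stationary independent increments exactly as in your Eq.\,for $\mathrm{E}[N_t\mid\mathcal{G}^X_s]$, and applies Doob's optional stopping theorem. The only difference is that you make explicit the localization at $\tau(n_0)=\min\{\tau_D,n_0\}$ and the uniform-integrability/domination step (via Theorem~\ref{th:5} and Corollary~\ref{cor:6}) needed to pass to the unbounded stopping time, which the paper applies without comment; this is a welcome tightening rather than a different method.
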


\begin{proof}
Let
\begin{equation*}
M_t=\left|X_t-\mathrm{E}[X_t]\right|^2-Ct.
\end{equation*}
Because of the stationary and independence of the increments, there exists
 \begin{equation}\label{eq:3.16}
 \begin{aligned}
\mathrm{E}\left[M_t|\mathcal{G}^X_s\right]&=\mathrm{E}\left[\left|X_{t-s}-\mathrm{E}[X_{t-s}]+X_s-\mathrm{E}[X_s]\right|^2|\mathcal{G}^X_s\right]-Ct \qquad (s\le t)\\
&=\mathrm{E}\left[\left|X_{t-s}-\mathrm{E}[X_{t-s}]\right|^2\right]+\left|X_s-\mathrm{E}[X_s]\right|^2-Ct\\
&=M_s,
\end{aligned}
\end{equation}
which implies that $M_t$ is a martingale.
Thus, by Doob's optional stopping theorem, we have
\begin{equation*}
\begin{aligned}
\mathrm{E}\left[M_{\tau_D}\right]&=\mathrm{E}\left[\left|X_{\tau_D}-\mathrm{E}\left[X_{\tau_D}\right]\right|^2\right]-C\mathrm{E}\left[\tau_D\right]\\
&=\mathrm{E}\left[M_0\right]=0.
\end{aligned}
\end{equation*}
Following the same analysis as above, we have
\begin{equation*}
\mathrm{E}\left[X_{\tau_D}\right]=\mathbf{c}\mathrm{E}\left[\tau_D\right].
\end{equation*}
The proof is completed.
\end{proof}
Theorem \ref{th:8} and Theorem \ref{th:9} show the relationship between first exit position and time for the anisotropic tempered stable process. Note that the method of proof of Theorem \ref{th:8} also applies for Theorem \ref{th:9}.

\section{Exact solution of Dirichlet problem for the tempered fractional Laplacian} \label{sec:4}

Based on the results given in Section \ref{sec:3}, we provide the Feynman-Kac representation of Eq. \eqref{eq:1.4} with suitable functions $g$ and $f$.
The characteristic function of anisotropic tempered stable process with $\alpha\in(0,1)\cup(1,2)$ can be rewritten as \cite{42}
\begin{equation*}
\hat{p}(k,t)=\mathrm{E}\left[\mathrm{e}^{-i (k\cdot X_t)}\right]=\mathrm{exp}\left[t \cdot (-1)^{\lceil\alpha\rceil}\int_{|\mathbf{\phi}|=1}\left((\lambda+ik\cdot \mathbf{\phi})^\alpha-\lambda^\alpha\right)m(\mathbf{\phi})\mathrm{d}\mathbf{\phi}\right],
\end{equation*}
which satisfies
\begin{equation}\label{eq:4.2}
\frac{\mathrm{d}\hat{p}(k,t)}{\mathrm{d}t}= (-1)^{\lceil\alpha\rceil}\left[\int_{|\mathbf{\phi}|=1}\left((\lambda+ik\cdot \mathbf{\phi})^\alpha-\lambda^\alpha\right)m(\mathbf{\phi})\mathrm{d}\mathbf{\phi}\right]\hat{p}(k,t).
\end{equation}
Performing the inverse Fourier transform on (\ref{eq:4.2}) leads to
the Fokker-Planck equation
\begin{equation}\label{eq:4.1}
\frac{\mathrm{d}p(X_t,t)}{\mathrm{d}t}=\mathrm{\Delta}^{\alpha/2,\lambda}_mp(X_t,t),
\end{equation}
where the operator $\mathrm{\Delta}^{\alpha/2,\lambda}_m$ is defined in (\ref{eq:1.5}).

The linear operator semigroup ($\mathrm{T}_t,\ t>0$) of the stochastic process $X_t$ is defined by
\begin{equation*}
\begin{aligned}
\mathrm{T}_tu(x)&=\mathrm{E}\left[u\left(X_t\right)\big|X_0=x\right],\\
\mathrm{T}_0u(x)&=u(x),\\
\mathrm{T}_t\cdot\mathrm{T}_s&=\mathrm{T}_{t+s}.
\end{aligned}
\end{equation*}
For $\mathrm{T}_t$, we have
\begin{equation}\label{eq:4.3}
\mathrm{A}u(x)=\lim_{t\to 0}\frac{\mathrm{T}_tu(x)-u(x)}{t},
\end{equation}
where $\mathrm{A}$ is the infinitesimal generator of $X_t$.

\begin{prop}\label{lemma2.1}
For the nonisotropic tempered $\alpha$-stable ($\alpha\in(0,1)\cup(1,2)$) process $X_t$, the operator $\mathrm{\Delta}^{\alpha/2,\lambda}_m$ is its infinitesimal generator.
\end{prop}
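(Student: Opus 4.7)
The most transparent path is via Fourier transform, which reduces the identification to a computation the paper has essentially already carried out in (4.2). By spatial homogeneity of $X_t$, we may write $\mathrm{T}_tu(x)=\int u(x+y)p(y,t)\,\mathrm{d}y$ with $p(\cdot,t)$ the density of $X_t$ starting at $0$; taking the spatial Fourier transform in $x$ and using the shift identity gives
\begin{equation*}
\widehat{\mathrm{T}_tu}(k)=\hat u(k)\,\mathrm{e}^{t\psi(k)},
\end{equation*}
where $\psi$ is the L\'evy symbol in (2.4) for $0<\alpha<1$ and (2.5) for $1<\alpha<2$. Dividing by $t$ and sending $t\to 0$ pointwise in $k$ then yields $\widehat{\mathrm{A}u}(k)=\psi(k)\hat u(k)$.

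Next I would identify $\psi(k)$ with the Fourier multiplier of $\mathrm{\Delta}^{\alpha/2,\lambda}_m$. The characteristic function displayed in Section \ref{sec:4} is already shown in (4.2) to satisfy the first-order ODE with coefficient $(-1)^{\lceil\alpha\rceil}\int_{|\phi|=1}[(\lambda+ik\cdot\phi)^\alpha-\lambda^\alpha]m(\phi)\,\mathrm{d}\phi$, and this is precisely the symbol of $\mathrm{\Delta}^{\alpha/2,\lambda}_m$; a polar-coordinate computation using the Laplace-type identity $\int_0^\infty(\mathrm{e}^{i(k\cdot\phi)r}-1-i(k\cdot\phi)r\chi_{\{r<1\}})\mathrm{e}^{-\lambda r}r^{-1-\alpha}\,\mathrm{d}r=(-1)^{\lceil\alpha\rceil}\Gamma(-\alpha)[(\lambda-ik\cdot\phi)^\alpha-\lambda^\alpha]$ matches it to the symbol read off from (2.4)--(2.5). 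By injectivity of the Fourier transform on $C_c^\infty(\mathrm{R}^d)$ this forces $\mathrm{A}u=\mathrm{\Delta}^{\alpha/2,\lambda}_m u$. Equivalently, one inverts term by term using $\mathcal{F}^{-1}[\mathrm{e}^{i(k\cdot y)}\hat u(k)](x)=u(x+y)$: this directly recovers the jump integral in (1.5), while in Case II the compensator $-i(k\cdot y)\chi_{\{|y|<1\}}$ inside $\psi$ becomes $(y\cdot\nabla u(x))\chi_{\{|y|<1\}}$ and the linear piece $c_{m,\alpha}\Gamma(1-\alpha)\lambda^{\alpha-1}i(\mathbf{b}\cdot k)$ becomes the residual drift $-c_{m,\alpha}\Gamma(1-\alpha)\lambda^{\alpha-1}(\mathbf{b}\cdot\nabla u)$, reproducing (1.5)/(2.3) exactly.

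The step I expect to demand the most care is justifying the interchange of $\lim_{t\to 0}t^{-1}(\mathrm{e}^{t\psi(k)}-1)=\psi(k)$ with the inverse Fourier transform, because $|\psi(k)|$ grows like $|k|^\alpha$ at infinity. Restricting to $u\in C_c^\infty(\mathrm{R}^d)$ supplies the majorant $C(1+|k|)^{\alpha}|\hat u(k)|$, which is integrable against $\mathrm{d}k$ and lets dominated convergence close the argument; the extension of the identification to the full generator domain then follows by standard semigroup density arguments. A secondary bookkeeping issue is reconciling the $\chi_{\{|y|<1\}}$ compensator with the explicit form of $\mathbf{\bar{b}}$ displayed after (2.7) in Case II, but this matching is automatic once the Fourier inversion is performed carefully.
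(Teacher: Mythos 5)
Your proposal follows essentially the same route as the paper: Fourier-transform the semigroup to get $\widehat{\mathrm{T}_tu}(k)=\hat u(k)\hat p(k,t)$, pass to the limit $t^{-1}(\hat p(k,t)-1)\to\psi(k)$, identify $\psi$ with the symbol in (4.2), and invert. The only difference is that you spell out the dominated-convergence justification and the symbol computation that the paper leaves implicit, which is a welcome but not structurally different addition.
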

\begin{proof}
The Fourier transform (FT) of $\mathrm{T}_tu(x)$ is
\begin{equation}\label{eq:4.4}
\begin{aligned}
\mathscr{F}\left[\mathrm{T}_tu(x)\right]&=\mathscr{F}\left[\int^{+\infty}_{-\infty}u\left(X_t+x\right)p\left(X_t,t\right)\mathrm{d}X_t\right]\\
&=\int^{+\infty}_{-\infty}\mathscr{F}\left[u\left(X_t+x\right)\right]p\left(X_t,t\right)\mathrm{d}X_t\\
&=\int^{+\infty}_{-\infty}\hat{u}(k)\mathrm{e}^{-i(k\cdot X_t)}p\left(X_t,t\right)\mathrm{d}X_t\\
&=\hat{u}(k)\hat{p}(k,t),
\end{aligned}
\end{equation}
where $\hat{u}(k)$ is the FT of $u(x)$, and the Fubini Theorem is used in the second equality.
Combining Eq.\ \eqref{eq:4.3} and Eq.\ \eqref{eq:4.4}, we have
\begin{equation}\label{eq:4.5}
\begin{aligned}
&\lim_{t\to0}\frac{\mathscr{F}\left[\mathrm{T}_tu(x)\right]-\mathscr{F}\left[u(x)\right]}{t}\\
=&\lim_{t\to0}\frac{\hat{u}(k)\hat{p}(k,t)-\hat{u}(k)}{t}\\
=&\lim_{t\to0}\frac{\hat{u}(k)\left[\hat{p}(k,t)-1\right]}{t}\\
=&(-1)^{\lceil\alpha\rceil}\left[\int_{|\mathbf{\phi}|=1}\left((\lambda+ik\cdot \mathbf{\phi})^\alpha-\lambda^\alpha\right)m(\mathbf{\phi})\mathrm{d}\mathbf{\phi}\right]\hat{u}(k).
\end{aligned}
\end{equation}
Making the inverse FT on Eq. \eqref{eq:4.5}, from Eq. \eqref{eq:4.1} and Eq. \eqref{eq:4.2}, we have
\begin{equation*}
\begin{aligned}
&\lim_{t\to 0}\frac{\mathrm{T}_tu(x)-u(x)}{t}\\
=&\mathrm{\Delta}^{\alpha/2,\lambda}_mu(x),
\end{aligned}
\end{equation*}
which leads to
\begin{equation*}
\mathrm{A}u(x)=\mathrm{\Delta}^{\alpha/2,\lambda}_mu(x).
\end{equation*}
The proof is completed.
\end{proof}

The measurable real-valued function $g$ on a Borel set $\mathfrak{B}\left(\mathrm{R}^d\right)$ belongs to $\mathcal{L}_{\lambda}$ if it satisfies
\begin{equation*}
 |g(x)|\le C\exp(\lambda |x|),\quad |x|\ge l,
\end{equation*}
where $l$ and $C$ are bounded constants.
\begin{thm} \label{th:11}
Suppose that $D$ is a bounded domain in $\mathrm{R}^d\,(d\geq2)$, $g$ is a uniformly continuous function on $D^c$, and $g(x)\in\mathcal{L}_{\lambda}\left(D^c\right)$. Moreover, assume that $f$ is a continuous bounded function in the domain $\overline{D}$. Then there exists an unique continuous solution to $\mathrm{Eq}.\ \eqref{eq:1.4}$:
\begin{displaymath}
 u(x)=\mathrm{E}_{x}\left[g\left(X_{\tau_D}\right)\right]-\mathrm{E}_{x}\left[\int^{\tau_D}_0f\left(X_s\right)\mathrm{d}s\right].
 \end{displaymath}
\end{thm}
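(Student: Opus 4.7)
The plan is to follow the classical Feynman-Kac/Dynkin martingale strategy, leveraging Proposition \ref{lemma2.1} (which identifies $\mathrm{\Delta}^{\alpha/2,\lambda}_m$ as the infinitesimal generator of $X_t$) together with the integrability estimates of Section \ref{sec:3}. First I would verify that both expectations in the stated formula are finite. Since $f$ is bounded on $\overline{D}$ and $D\subseteq B(0,R)$ for some $R<\infty$, Theorem \ref{th:1} gives $\mathrm{E}_x[\int_0^{\tau_D}|f(X_s)|\,\mathrm{d}s]\le \|f\|_\infty\,\mathrm{E}_x[\tau_D]<\infty$. For the boundary term, $g\in\mathcal{L}_\lambda(D^c)$ gives $|g(X_{\tau_D})|\le C\exp(\lambda|X_{\tau_D}|)$ outside a compact set, and Corollary \ref{cor:6} supplies $\mathrm{E}_x[\exp(\lambda|X_{\tau_D}|)]<\infty$; the two combine to show $\mathrm{E}_x[|g(X_{\tau_D})|]<\infty$. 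The growth class $\mathcal{L}_\lambda$ is thus matched exactly to the exponential moment that tempering makes available, which explains the hypothesis.

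For existence, let $v(x)$ denote the right-hand side of the formula. When $x\in D^c$ one has $\tau_D=0$ and $X_0=x$, so $v(x)=g(x)$ automatically, handling the boundary condition. For $x\in D$, I would invoke the strong Markov property of $X_t$ at a small time $h>0$ to obtain
\[
v(x)=\mathrm{E}_x[v(X_h)]-\mathrm{E}_x\left[\int_0^{h}f(X_s)\,\mathrm{d}s\right];
\]
rearranging, dividing by $h$, and letting $h\downarrow 0$ invokes the generator definition \eqref{eq:4.3} together with Proposition \ref{lemma2.1} to produce $\mathrm{\Delta}^{\alpha/2,\lambda}_m v(x)=f(x)$ pointwise in $D$. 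Continuity of $v$ on $D$ follows from dominated convergence applied to the two expectations using the envelope from the previous paragraph, while continuity up to $\partial D$ uses uniform continuity of $g$ together with the fact that the jump process crosses $\partial D$ immediately from starting points sufficiently close to it.

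For uniqueness, let $u$ be any continuous solution of \eqref{eq:1.4}. Dynkin's formula for the L\'evy-type process $X_t$ with generator $\mathrm{\Delta}^{\alpha/2,\lambda}_m$ yields, after substituting $\mathrm{\Delta}^{\alpha/2,\lambda}_m u=f$ in $D$ and $u=g$ on $D^c$,
\[
u(x)=\mathrm{E}_x[u(X_{\tau_D\wedge n})]-\mathrm{E}_x\left[\int_0^{\tau_D\wedge n}f(X_s)\,\mathrm{d}s\right]
\]
for each $n$; passing to the limit $n\to\infty$ by dominated convergence recovers the Feynman-Kac representation, forcing $u=v$. The hard part will be justifying this passage: because $X_t$ jumps, $X_{\tau_D\wedge n}$ can land far outside $D$ at the exit instant, so one needs a dominating envelope that simultaneously controls the bounded interior contribution and the possibly exponentially large boundary contribution. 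The pairing of $g\in\mathcal{L}_\lambda$ with the exponential exit-position estimate of Corollary \ref{cor:6} is precisely what makes this domination permissible, and is the technical fulcrum of the argument.
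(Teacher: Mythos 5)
Your argument follows essentially the same route as the paper's: finiteness of the two expectations is obtained by pairing the boundedness of $f$ with Theorem \ref{th:1} and the growth class $\mathcal{L}_\lambda$ with the exponential exit-position moment of Corollary \ref{cor:6}, and the representation itself comes from stopping the Dynkin martingale $M_t=u(X_t)-u(x)-\int_0^t\mathrm{A}u(X_s)\,\mathrm{d}s$ (built on Proposition \ref{lemma2.1}) at $\tau_D$; your $\tau_D\wedge n$ truncation followed by dominated convergence is just a more careful rendering of the paper's optional-stopping step, and you correctly identify the $\mathcal{L}_\lambda$/Corollary \ref{cor:6} pairing as the point that makes the passage to the limit legitimate. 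The one place you go beyond the paper is the existence direction, i.e.\ verifying that $v(x)=\mathrm{E}_x[g(X_{\tau_D})]-\mathrm{E}_x\left[\int_0^{\tau_D}f(X_s)\,\mathrm{d}s\right]$ actually satisfies \eqref{eq:1.4}; the paper only derives the formula for a presumed solution. In that added sketch the displayed identity is not what the strong Markov property yields: it should be $v(x)=\mathrm{E}_x\left[v(X_{h\wedge\tau_D})\right]-\mathrm{E}_x\left[\int_0^{h\wedge\tau_D}f(X_s)\,\mathrm{d}s\right]$, because on $\{\tau_D<h\}$ the Feynman--Kac functional is frozen at $g(X_{\tau_D})$ while $v(X_h)$ is evaluated at a different point. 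Since $P_x(\tau_D<h)=O(h)$ for this jump process, one must check that the resulting discrepancy is $o(h)$ (it is, using the uniform continuity of $g$ and the $O(h^2)$ probability of a second jump or re-entry before time $h$) before dividing by $h$; and the limit so obtained is Dynkin's characteristic operator, which still has to be identified with $\mathrm{\Delta}^{\alpha/2,\lambda}_m$ acting on $v$. These are repairable refinements of a step the paper omits entirely, not flaws in the part of your argument that parallels the paper.
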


To prove Theorem \ref{th:11}, $\mathrm{E}_{x}\left[g\left(X_{\tau_D}\right)\right]$ and $\mathrm{E}_{x}\left[\int^{\tau_D}_0f\left(X_s\right)\mathrm{d}s\right]$ must firstly exist. Since $D$ is a bounded domain, one can find a sphere $B(0,l),\ 0<l<\infty$, such that $D$ is a subset of $B(0,l)$. Calculating expectations by conditioning, we have
\begin{equation}\label{eq:4.6}
\begin{aligned}
\mathrm{E}_{x}\left[g\left(X_{\tau_D}\right)\right]&=\mathrm{E}_{x}\left[g\left(X_{\tau_D}\right)|\chi\{\tau_D=\tau_{B(0,l)}\}\right]
+\mathrm{E}_{x}\left[g\left(X_{\tau_D}\right)|\chi\{\tau_D<\tau_{B(0,l)}\}\right]\\
&\le\mathrm{E}_{x}\left[g\left(X_{\tau_{B(0,l)}}\right)\right]+\max_{x\in B(0,l)\setminus D}g(x).
\end{aligned}
\end{equation}
The uniform continuity of $g(x)$ leads to
\begin{equation*}
\max_{x\in B(0,l)\setminus D}g(x)<\infty.
\end{equation*}
By Corollary \ref{cor:6}, there exists
\begin{equation*}
\begin{aligned}
\mathrm{E}_x\left[g\left(X_{\tau_{B(0,l)}}\right)\right]&\le \mathrm{E}_x\left[\left|g\left(X_{\tau_{B(0,l)}}\right)\right|\right]\\
&\le C\mathrm{E}_x\left[\exp\left(\lambda \left|X_{\tau_{B(0,l)}}\right|\right)\right]\\
&<\infty.
\end{aligned}
\end{equation*}
So, finally we arrive at $\mathrm{E}_{x}\left[g\left(X_{\tau_D}\right)\right]<\infty$. According to Theorem \ref{th:1} and $f<\infty$, there is
\begin{displaymath}
\begin{aligned}
\mathrm{E}_x\left[\int^{\tau_D}_0f\left(X_s\right)\mathrm{d}s\right]&\le \sup_{0<t<\tau_D}f\left(X_t\right)\mathrm{E}_x\left[\int^{\tau_D}_0\mathrm{d}s\right]\\
&= \sup_{0<t<\tau_D}f\left(X_t\right)\mathrm{E}_x\left[\tau_D\right]\\
&<\infty.
\end{aligned}
\end{displaymath}

Proof of Theorem \ref{th:11}. Let
\begin{displaymath}
 M_t=u(X_t)-u(x)-\int^t_0\mathrm{A}u(X_s)\mathrm{d}s.
 \end{displaymath}
 Then
 \begin{equation}\label{eq:4.7}
 \begin{aligned}
 \mathrm{E}_x\left[M_{t+h}|\mathcal{G}^X_t\right]&=
 \mathrm{E}_x\left[u(X_{t+h})|\mathcal{G}^X_t\right]-u(x)-\mathrm{E}_x\left[\int^{t+h}_0\mathrm{A}u(X_s)\mathrm{d}s|\mathcal{G}^X_{t}\right]\\
 &=\mathrm{E}_x\left[u(X_{t+h})|X_t\right]-u(x)-\int^{t+h}_0\mathrm{E}_x\left[\mathrm{A}u(X_s)\mathrm{d}s|X_{t}\right]\\
 &=\mathrm{E}_x\left[u(X_{t+h}-X_t+X_t)|X_t\right]-u(x)-\int^{t+h}_0\mathrm{E}_x\left[\mathrm{A}u(X_s)\mathrm{d}s|X_{t}\right]\\
 &=\mathrm{E}_x\left[u(X_{h}+X_t)|X_t\right]-u(x)-\int^{t+h}_t\mathrm{E}_x\left[\mathrm{A}u(X_s)\mathrm{d}s|X_{t}\right]-\int^{t}_0\left[\mathrm{A}u(X_s)\mathrm{d}s\right]\\
 &=\mathrm{T}_hu(X_t)-u(x)-\int^{t+h}_t\mathrm{T}_{s-t}\left[\mathrm{A}u(X_t)\right]\mathrm{d}s-\int^{t}_0\left[\mathrm{A}u(X_s)\mathrm{d}s\right],\\
 \end{aligned}
 \end{equation}
 where the Fubini Theorem is used in the second equality, and the stationarity and independence of the increments of the process $X_t$ are used in the fourth equality.


Combining $\mathrm{T}_s\cdot\mathrm{T}_t=\mathrm{T}_{t+s}$ and Eq.\ \eqref{eq:4.3}, there is
\begin{equation*}
\begin{aligned}
\mathrm{A}\mathrm{T}_su(x)&=\lim_{t\to 0}\frac{\mathrm{T}_t\left[\mathrm{T}_su(x)\right]-\mathrm{T}_su(x)}{t}\\
&=\lim_{t\to 0}\frac{\mathrm{T}_s\left[\mathrm{T}_tu(x)-u(x)\right]}{t}\\
&=\mathrm{T}_s\mathrm{A}u(x),
\end{aligned}
\end{equation*}
which leads to
\begin{equation}\label{eq:4.8}
\begin{aligned}
\mathrm{E}_x\left[M_{t+h}|\mathcal{G}^X_t\right]&=
\mathrm{T}_hu(X_t)-u(x)-\int^{t+h}_t\mathrm{A}\mathrm{T}_{s-t}\left[u(X_t)\right]\mathrm{d}s-\int^{t}_0\left[\mathrm{A}u(X_s)\mathrm{d}s\right]\\
&=\mathrm{T}_hu(X_t)-u(x)-\mathrm{T}_hu(X_t)+\mathrm{T}_0u(X_t)-\int^{t}_0\left[\mathrm{A}u(X_s)\mathrm{d}s\right]\\
&=u(X_t)-u(x)-\int^{t}_0\left[\mathrm{A}u(X_s)\mathrm{d}s\right]\\
&=M_t.
\end{aligned}
 \end{equation}
Let $\Omega$ denote the set of outcomes of the random experiment $X_t\,(t\geq0)$ with fixed $t$, and $\mathcal{D}=\{\emptyset,\Omega\}$.
By the double expectation formula, we have
From  Eq.\ \eqref{eq:4.8} and the double expectation formula, we have
\begin{equation}\label{eq:4.10}
\begin{aligned}
\mathrm{E}_x\left[\mathrm{E}_x\left[M_{\tau_D}|\mathcal{G}^X_0\right]|\mathcal{D}\right]
&=\mathrm{E}_x\left[M_{\tau_D}|\mathcal{D}\right]\\
&=\mathrm{E}_x\left[M_0|\mathcal{D}\right]\\
&=\mathrm{E}_x\left[M_0\right].
\end{aligned}
\end{equation}
Combining Eq.\ \eqref{eq:4.7} and Eq.\ \eqref{eq:4.10} leads to
\begin{equation}\label{eq:4.11}
\mathrm{E}_x\left[u(X_{\tau_D})\right]-u(x)-\mathrm{E}_x\left[\int^{\tau_D}_0\mathrm{A}u(X_s)\mathrm{d}s\right]=\mathrm{E}_x\left[M_0\right],
\end{equation}
which results in
\begin{equation*}
u(x)=\mathrm{E}_x\left[g(X_{\tau_D})\right]-\mathrm{E}_x\left[\int^{\tau_D}_0f(X_s)\mathrm{d}s\right].
\end{equation*}
The proof is completed. 

Theorem \ref{th:11} shows that the solution of Eq.\ \eqref{eq:1.4} can be obtained numerically by straightforward Monte Carlo simulations of the path of $X_t$ until first exit from $D$.
By the strong law of large numbers, we have
\begin{equation}\label{eq:4.12}
\begin{aligned}
&\lim_{n\to\infty}\frac{1}{n}\sum^n_{i=1}\left[g\left(X^i_{\tau^i_D}\right)-\int_0^{\tau^i_D}f\left(X^i_s\right)\mathrm{d}s\right]\\
&=\mathrm{E}_x\left[g(X_{\tau_D})\right]-\mathrm{E}_x\left[\int^{\tau_D}_0f(X_s)\mathrm{d}s\right]=u(x),\qquad \mathrm{almost} \ \mathrm{surely},
\end{aligned}
\end{equation}
where $X^i_{\tau^i_D}$ are i.i.d. copies of $X_{\tau_D}$ starting from $x\in D$.
Practically, it is impossible to take the limit in Eq.\ \eqref{eq:4.12}, so one needs to truncate the series of estimate by taking sufficiently large $n$. Then, there is a truncation error
\begin{equation}\label{Numer_error}
\mathrm{error}=\frac{1}{n}\sum^n_{i=1}\left[g\left(X^i_{\tau^i_D}\right)-\int_0^{\tau^i_D}f\left(X^i_s\right)\mathrm{d}s\right]-u(x).
\end{equation}

According to (\ref{eq:4.6}), if $g^2(x)\in\mathcal{L}_{\lambda}\left(D^c\right)$, then $\mathrm{E}\left[\left(g(X_{\tau_D}\right)^2\right]<\infty$. From Corollary \ref{cor:2}, we have 
 \begin{equation}\label{eq:4.13}
 \mathrm{E}_x\left[\left(\int^{\tau_D}_0f(X_s)\mathrm{d}s\right)^2\right]\le\left(\sup_{0<t<\tau_D}f\left(X_t\right)\right)^2\mathrm{E}_x\left[\left(\int^{\tau_D}_0\mathrm{d}s\right)^2\right]
 <\infty.
\end{equation}
Then, there exists 
\begin{equation}\label{eq:4.14}
\mathrm{E}_x\left[\left(g(X_{\tau_D})-\int^{\tau_D}_0f(X_s)\mathrm{d}s\right)^2\right]<\infty.
\end{equation}
Using the central limit theorem, in the sense of weak convergence, we have
\begin{equation}\label{eq:4.15}
\begin{aligned}
&\lim_{n\to\infty} n^{1/2}\left(\frac{1}{n}\sum^n_{i=1}\left[g\left(X^i_{\tau^i_D}\right)-\int_0^{\tau^i_D}f\left(X^i_s\right)\mathrm{d}s\right]-u(x)\right)\\
&=\mathrm{Normal}\left(0,\mathrm{Var}\left(g(X_{\tau_D})-\int^{\tau_D}_0f(X_s)\mathrm{d}s\right)\right).
\end{aligned}
\end{equation}
From (\ref{eq:4.15}), it can be seen that the truncation error is $O(1/\sqrt{n})$ for the Monte Carlo method. Or rather, the error is approximately a normal random variable for large $n$, i.e.,
\begin{equation}\label{eq:4.17}
\mathrm{error}\approx  \widetilde{X}/\sqrt{n},
\end{equation}
where $\widetilde{X}$ is a normal random variable with the distribution Eq. \eqref{eq:4.15}. One can reduce the error by increasing $n$.

\section{Numerical experiments}
In this section, based on (\ref{eq:4.12}), we numerically solve Eq. \eqref{eq:1.4} by generating the paths of the stochastic processes $X_{\tau_D}$. The validity of the numerical method is verified by comparing the simulation result with the exact solution. 

In the simulation, the parameters are taken as follows. The domain $D$ is the unit ball in $\mathrm{R}^2$, $f(x)=0$, $g(x)=x_1+x_2$ for $x=[x_1,x_2]\notin D$, and $X_0=[-0.2,0.9]$. The probability distribution of particles in direction $m(\theta)=1/{\pi}$ for $\arg(\theta)\in(0.5\pi,\ \pi)$ and $m(\theta)=1/{3\pi}$ for $\arg(\theta)\in(0,\ 0.5\pi)\cup(\pi,\ 2\pi)$. Then, according to Eq.\ \eqref{eq:1.5}, we obtain the exact solution of Eq.\ \eqref{eq:1.4}, that is, $u(x)=x_1+x_2$; in particular,  $u(X_0)=x_1+x_2=0.7$. Then, using Eq.\ \eqref{eq:4.12}, one can compute the numerical solution of Eq.\ \eqref{eq:1.4}. 
For the algorithm of simulation (see Appendix), we take the sample number $n=10000$, $\Delta t=5\times10^{-4}$, $b=10$, and  $c_{m, \alpha}=\frac{1}{|\Gamma(-\alpha)|}$. The above functions and parameters remain unchanged unless otherwise specified.

\begin{figure}[H]
\centering
\subfigure[]{
\begin{minipage}[t]{0.4\textwidth}\label{fig:a2}
\centering
\includegraphics[scale=0.35]{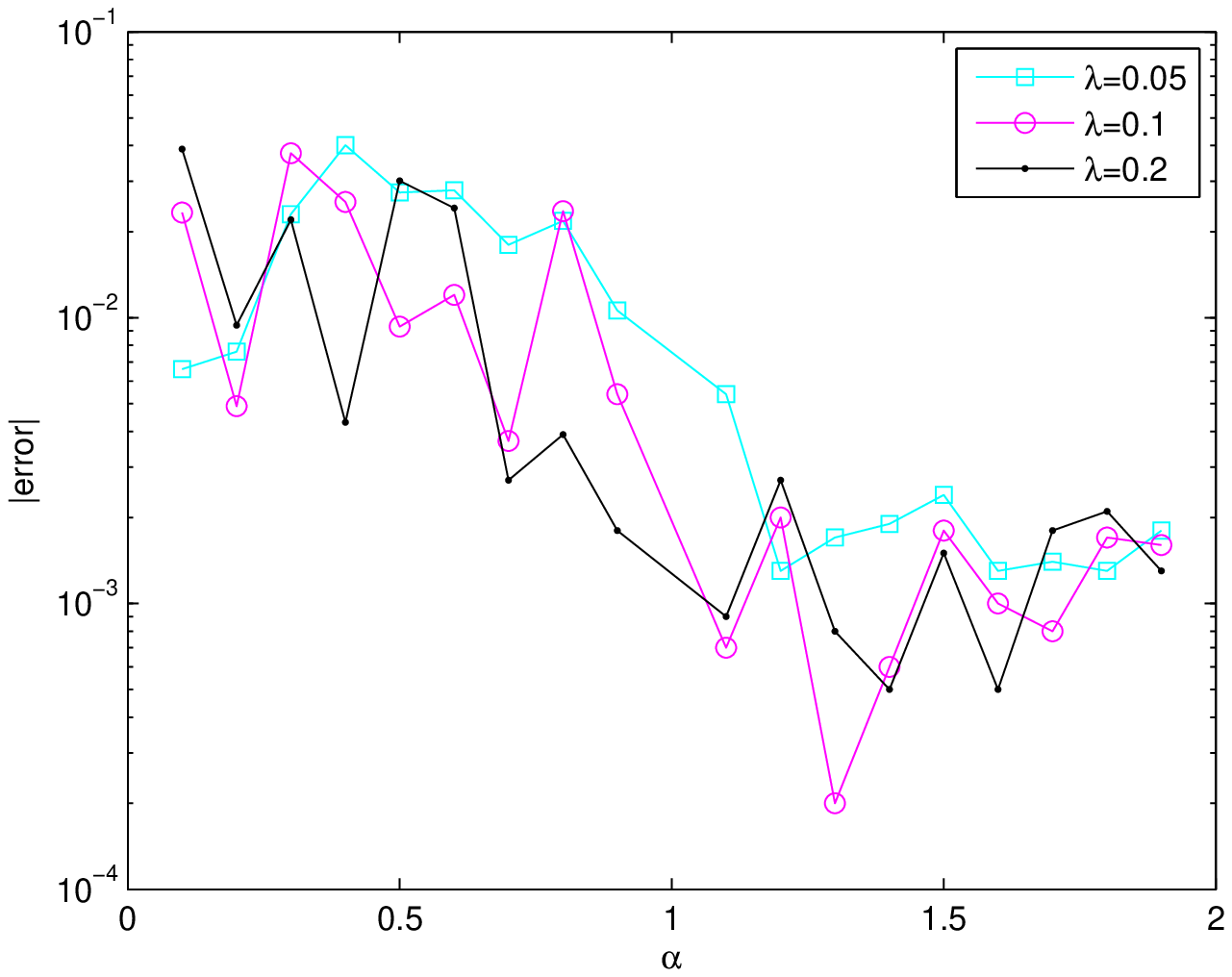}
\end{minipage}}\subfigure[]{
\begin{minipage}[t]{0.4\textwidth}\label{fig:b2}
\centering
\includegraphics[scale=0.35]{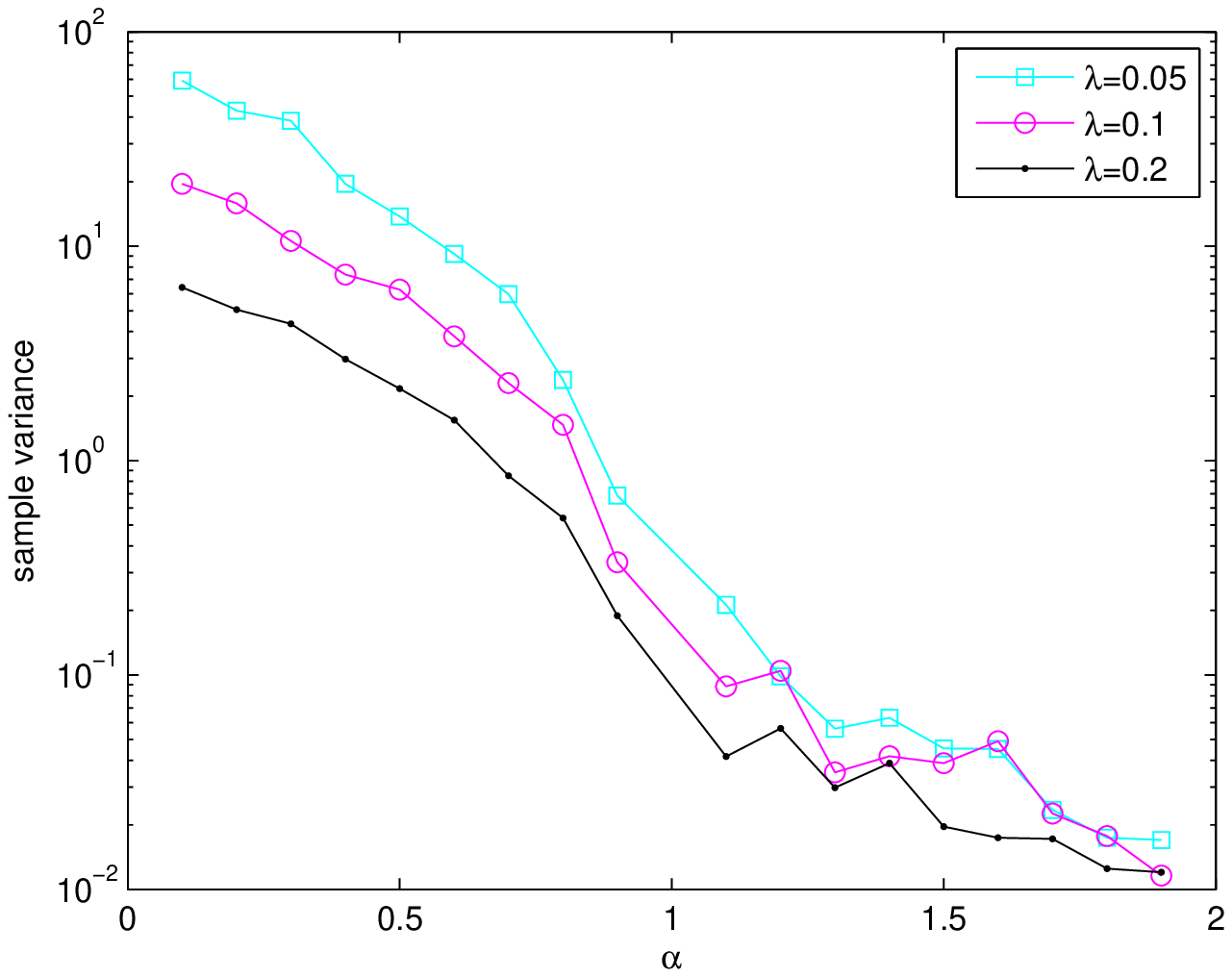}
\end{minipage}}
\caption{Simulation results for Eq.\ \eqref{eq:1.4}. The left-hand plot (a) is for $|\mathrm{error}|$ (\ref{Numer_error}), and the right-hand plot (b) for the sample variance.}\label{fig:3}
\end{figure}
\figurename\ \ref{fig:3} shows that the sample variances decrease with the increase of $\alpha$ and  $ \lambda$, and similarly $|\mathrm{error}|$ also tends to decrease. This figure also illustrates  the effect of variance on the $\mathrm{error}$. Next, we show the influence of $n$ on the error.

%
\begin{figure}[H]
\centering
\subfigure[]{
\begin{minipage}[t]{0.4\textwidth}\label{fig:a2}
\centering
\includegraphics[scale=0.35]{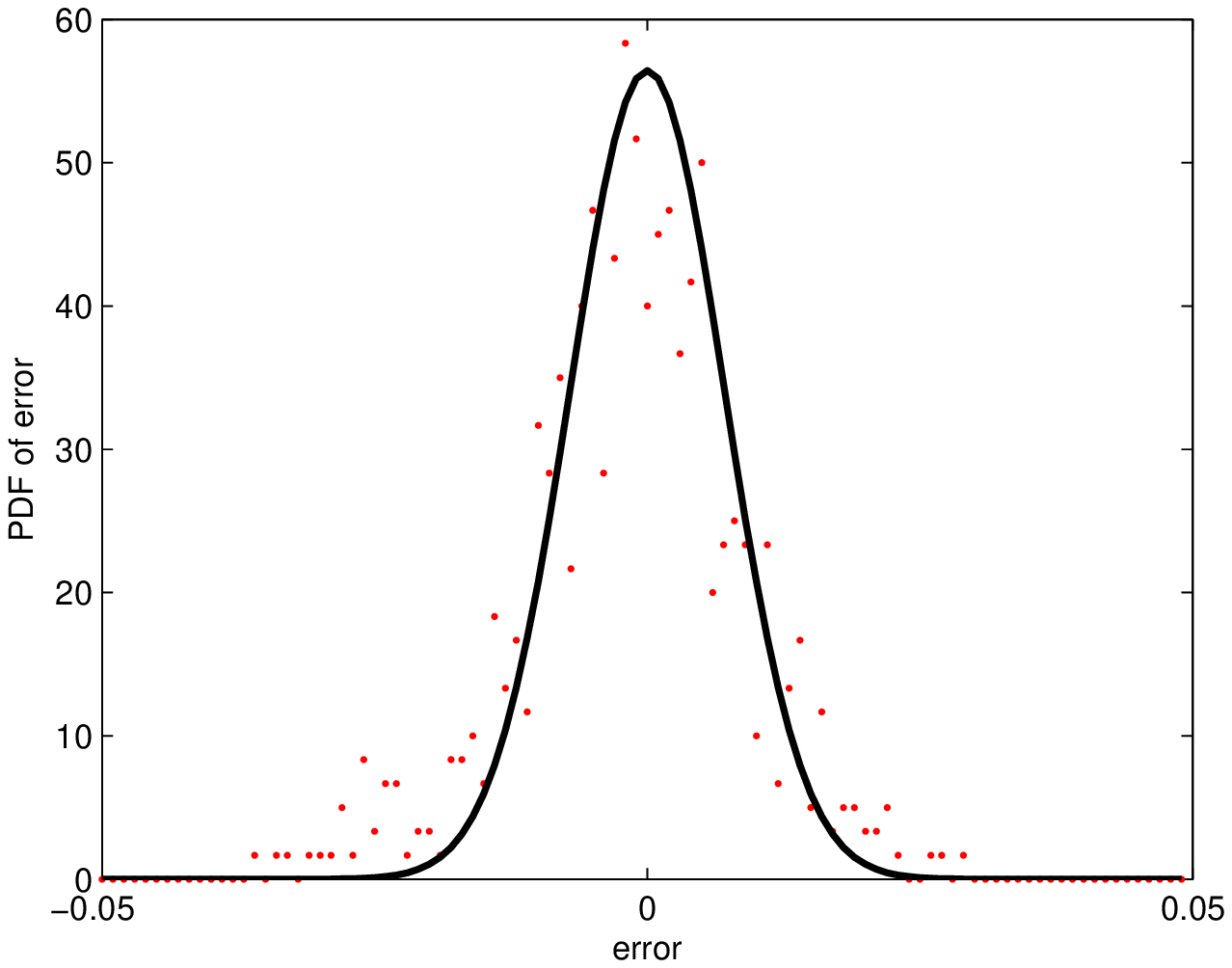}
\end{minipage}}\subfigure[]{
\begin{minipage}[t]{0.4\textwidth}\label{fig:b2}
\centering
\includegraphics[scale=0.35]{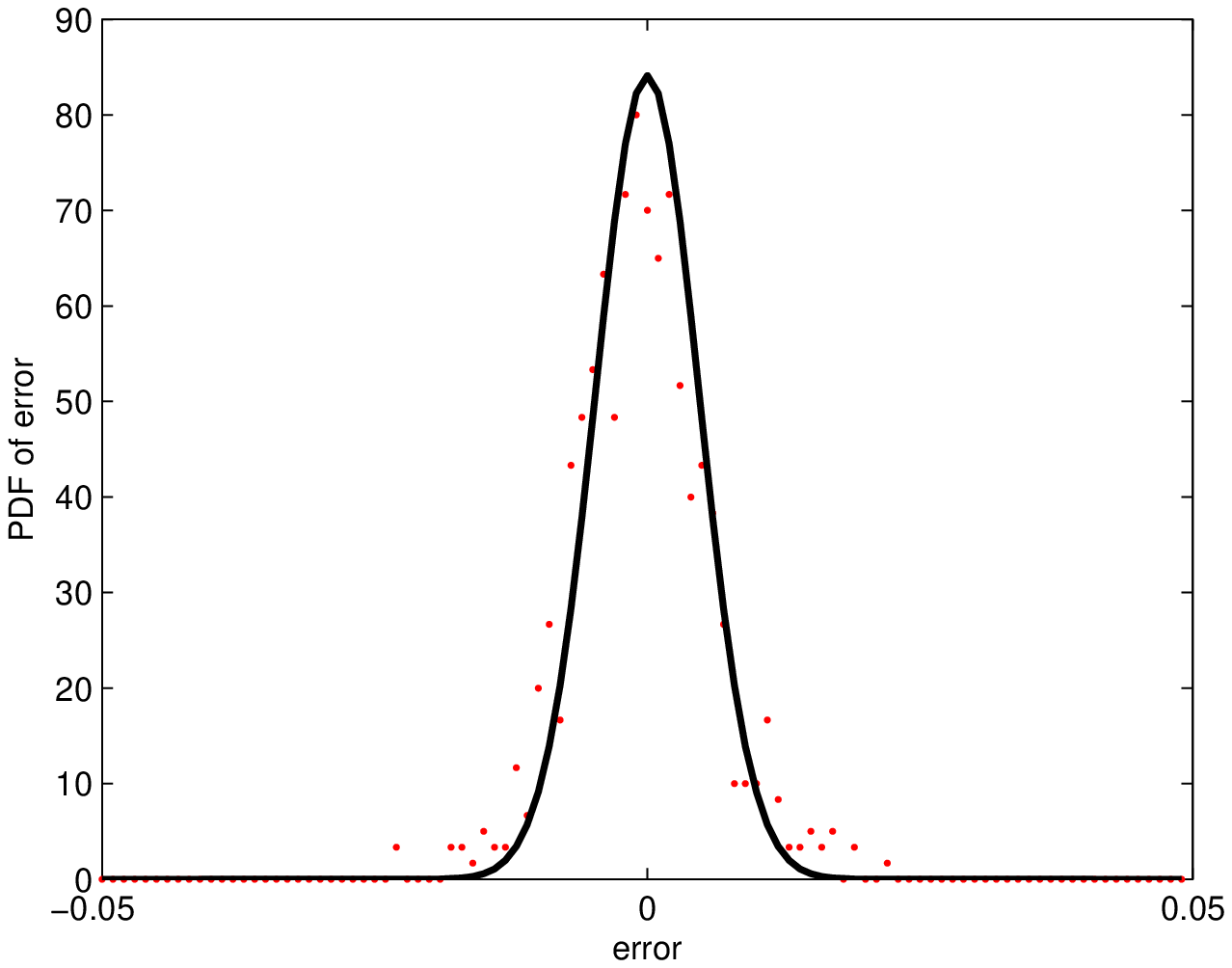}
\end{minipage}}
\caption{Distribution of simulation errors for $\alpha=1.2$, $\lambda=0.05$, and $n=2000$ for (a) ($n=4000$ for (b)).  
}\label{fig:4}
\end{figure}
For fixed $n$, repeating the simulation $600$ times leads to the approximate distribution of errors. Figure \ \ref{fig:4} shows that the errors are normally distributed, where the real curve is the plot of the function $\left(2\pi\sigma^2/n\right)^{-0.5}\exp\left(-\frac{x^2}{2\sigma^2/n}\right)$ with $\sigma^2=0.1$ obtained from \figurename \ \ref{fig:3}. Obviously the larger $n$ is, the smaller the variance of errors becomes.
Figure \ref{fig:5} indicates the convergence of the algorithm, as expected, being  $O(1/\sqrt{n})$.
\captionsetup[figure]{name={Fig.}}
\begin{figure}[H]
\centering
\includegraphics[scale=0.45]{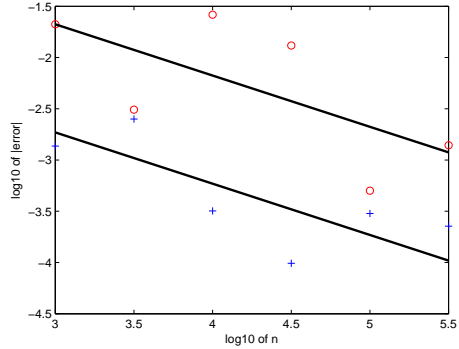}
\caption{ Convergence of the simulations, respectively, with  $\alpha=0.7, \lambda=0.2$ (red circle) and $\alpha=1.2, \lambda=0.05$ (blue plus ), when $n$ increases.
}\label{fig:5}
\end{figure}

\section{Conclusion}
The first exit and Dirichlet problems for the nonisotropic tempered $\alpha$-stable process $X_t$ have been discussed. With the obtained upper bounds of all moments of the first exit position $\left|X_{\tau_D}\right|$ and the first exit time $\tau_D$, we show that the PDF of $\left|X_{\tau_D}\right|$ or $\tau_D$ exponentially decays with the increase of $\left|X_{\tau_D}\right|$ or $\tau_D$, and $\mathrm{E}\left[\tau_D\right]\sim \left|\mathrm{E}\left[X_{\tau_D}\right]\right|$,\ $\mathrm{E}\left[\tau_D\right]\sim\mathrm{E}\left[\left|X_{\tau_D}-\mathrm{E}\left[X_{\tau_D}\right]\right|^2\right] $. The Feynman-Kac representation is provided for the Dirichlet problem with the operator $\mathrm{\Delta}^{\alpha/2,\lambda}_m$, and some numerical simulations are performed to show its usefulness.

\section*{Acknowledgements}
This work was supported by the National Natural Science Foundation of China under grant no. 11671182, and the Fundamental Research Funds for the Central Universities under grant no. lzujbky-2018-ot03.

\appendix
\section{Description for the algorithm of simulation  }
 We work in two dimensions. Let $m(\theta)$ be the probability distribution of particles in $\theta$-direction, and $c_{m, \alpha}=\frac{1}{|\Gamma(-\alpha)|}$. Referring to \cite{48}, we present the description of the algorithm.

For $0<\alpha<1$, set
\begin{equation}\label{eq:A1}
S=\left(\Delta t\right)^{1/\alpha}\frac{\sin \alpha(U+\pi/2)}{\cos(U)^{1/\alpha}}\left(\frac{\cos\left(U-\alpha(U+\pi/2)\right)}{W}\right)^{(1-\alpha)\alpha},
\end{equation}
where $U$ is an uniform distribution on $[-\pi/2,\pi/2]$, and $W$ is an exponential distribution with mean 1. Generate the random variable (r.v.) $Z$ of exponential distribution with mean $\lambda^{-1}$; 
 if $Z<S$, reject and draw again, otherwise set $X_{\Delta t}=[S\cos\theta,S\sin\theta]$, where  the r.v. $\theta$ is generated by the PDF $m(\theta)$.

When $1<\alpha<2$, set
\begin{equation}\label{eq:A2}
S=\left(\Delta t\right)^{1/\alpha}\frac{\sin \alpha(U-\pi/2)}{\cos(U)^{1/\alpha}}\left(\frac{\cos\left(U-\alpha(U-\pi/2+\pi/\alpha)\right)}{W}\right)^{(1-\alpha)\alpha};
\end{equation}
if $S>Z-b\,(b>0)$, reject and draw again, otherwise set $X_{\Delta t}=[S\cos\theta,S\sin\theta]$; again the PDF of $\theta$ is $m(\theta)$.

To simulate the entire path of the stable process, one can rewrite $X_t$ as follows
\begin{equation*}
X_t=\sum^{t/{\Delta t}}_{i=1}\left[X_{i\Delta t}-X_{(i-1)\Delta t}\right].
\end{equation*}
The stationary and independent increments of $X_t$ show that
\begin{equation*}
X_{i\Delta t}-X_{(i-1)\Delta t}\stackrel{d}{=}X_{\Delta t}.
\end{equation*}
According to the above, one can generate the stochastic processes $X^j_t\,(j=1,2,\dots,n)$, which denotes the path of the $j$-th particle.

To calculate the PDF of $\tau_D$, divide the time interval $[0, T]$ into $m_1$ equal parts, i.e., $0=t_0<t_1<\dots<t_{m_1}=T, \,t_i=i h_1\,(i=0,1,\cdots,m_1)$. Count the number $n_1^{i+1}$ of particles, the time of which spend on lies in the interval $(t_i,t_{i+1}]$  when firstly leaving the domain $B(0,r)$. Then, $\frac{n_1^{i+1}}{nh_1}$ denotes the PDF of $\tau_D$ in  $(t_i,t_{i+1}]$.


To calculate the PDF of $|X_{\tau_D}|$, divide the interval $(r,l]$ into $m_2$ equal parts, i.e.,  $r=r_0<r_1<\cdots<r_{m_2}=l, r_i=ih_2+r_0\,(i=0,1,\cdots,m_2)$. Count the number $n_2^{i+1}$ of particles that fall into the annular region $(r_i,r_{i+1}]$, when first exiting the domain $B(0,r)$. Then, $\frac{n_2^{i+1}}{nh_2}$ denotes the PDF of $|X_{\tau_D}|$ in $(r_i,r_{i+1}]$.

%
%

\end{document}